\numberwithin{equation}{section}
\newtheorem{thm}{Theorem}[section]
\newtheorem{lemma}[thm]{Lemma}
\newtheorem{pro}[thm]{Proposition}
\title{\vspace{0cm}{The Bondage Number of the Strong Product of a Complete Graph and a Path\thanks{This
work is supported by NSFC (grant no. 61073046).}}}
\author{Weisheng Zhao,\,Heping Zhang
\\\small{School of Mathematics and Statistics, Lanzhou
University, Lanzhou, Gansu 730000, P. R. China}
\\\small{E-mail addresses: weishengzhao101@aliyun.com, zhanghp@lzu.edu.cn
}}
\date{}
\begin{document}

\maketitle

\begin{abstract}
The bondage number $b(G)$ of a graph $G$ is the cardinality of a minimum edge set whose removal from $G$ results in a graph with the domination number greater than that of $G$. It is a parameter to measure the vulnerability of a communication network under link failure. In this paper, we obtain the exact value of the bondage number of the strong product of a complete graph and a path. That is, for any two integers $m\geq1$ and $n\geq2$, $b(K_{m}\boxtimes P_{n})=\lceil\frac{m}{2}\rceil$ if $n\equiv 0$ (mod 3); $m$ if $n\equiv 2$ (mod 3); $\lceil\frac{3m}{2}\rceil$ if $n\equiv 1$ (mod 3). Furthermore, we determine the exact value of the bondage number of the strong product of a complete graph and a special starlike tree.

\vspace{0.0cm}
\end{abstract}

\textbf{Key words:} Bondage number; Strong product; Complete graph; Path; Starlike tree

\textbf{AMS 2010 Mathematics Subject Classification:} 05C69; 05C76
\section{Introduction}
All graphs considered in this paper are finite, undirected and simple. Let $G$ be a graph with vertex-set $V(G)$ and edge-set $E(G)$. A subset $D$ of $V(G)$ is called a \emph{dominating set} of $G$ if every vertex of $G$ is either in $D$ or adjacent to a vertex of $D$.  The \emph{domination number} $\gamma(G)$ is the cardinality of a minimum dominating set of $G$. A subset $S$ of $E(G)$ is called a \emph{bondage edge set} of a nonempty graph $G$ if $\gamma(G-S)>\gamma(G)$. The \emph{bondage number} of $G$, denoted by $b(G)$, is the cardinality of a minimum bondage edge set of $G$.

The  bondage number of a graph was coined by Fink, Jacobson, Kinch and Roberts \cite{Fink}. Before, it was called domination line-stability in \cite{Bauer}.

We know that a communication network can be modeled by a graph whose vertices represent fixed sites and whose edges represent communication links between these sites. The problem of finding a minimum dominating set in the graph corresponds to that of selecting a smallest set of sites at which to place transmitters such that every site in the network that does not have a transmitter is joined by a communication link to one that does have a transmitter. However, when some communication links malfunction, the transmitters may not transmit instructions to their neighboring sites normally. This corresponds to that when we remove some edges from a graph, some minimum dominating sets can not dominate all the vertices of the graph any longer.

Now, we consider such a question that what is the smallest number of edges we remove will render every minimum dominating set of the original graph to be a ``nondominating" set of the resulting graph. This smallest number is just the bondage number, which is a parameter to measure the vulnerability of a communication network under link failure.

In \cite{Hu1}, Hu and Xu showed that the problem of determining the bondage number of a general graph
is NP-hard. So it is significant to give out the value of the bondage number of a network. For some special graphs, the exact values of their bondage numbers have been obtained, such as complete graphs, paths, cycles, complete $t$-partition graphs \cite{Fink}, trees \cite{Fink,Hartnell,Teschner}, complete $t$-partite digraphs \cite{Zhang}, de Bruijn and Kautz digraphs \cite{Huang1}, etc.

Product graphs are one kind of important networks. On the study of the bondage number of the product graphs, so far, only the cartesian product graphs are considered. For example, $b(K_{n}\square K_{n})$ for $n\geq3$ \cite{Hartnell3,Teschner0}, $b(C_{n}\square P_{2})$ for $n\geq3$ \cite{Dunbar}, $b(C_{n}\square C_{3})$ for $n\geq4$ \cite{Sohn},   $b(C_{n}\square C_{4})$ for $n\geq4$ \cite{Kang}, $b(C_{n}\square C_{5})$ for $n\not\equiv3$ (mod 5) and $n\geq5$ \cite{Cao}, $b(P_{n}\square P_{2})$, $b(P_{n}\square P_{3})$ and $b(P_{n}\square P_{4})$ for $n\geq2$ \cite{Hu2} have been determined.

Let $G$ and $H$ be two graphs. The \emph{strong product} of $G$ and $H$, denote by $G \boxtimes H$, is a graph such that
 $V(G \boxtimes H)=V(G)\times V(H)$, two vertices $(g_{1}, h_{1})$ and $(g_{2}, h_{2})$ is adjacent
if and only if either $g_{1}=g_{2}$ and $h_{1}h_{2}\in E(H)$, or $g_{1}g_{2}\in E(G)$ and
$h_{1}=h_{2}$, or $g_{1}g_{2}\in E(G)$ and $h_{1}h_{2}\in E(H)$ (See \cite{Hammack}).
\begin{center}
\begin{picture}(328,76)
\multiput(0,0)(0,20){3}{\put(20,20){\line(1,0){60}}}
\multiput(0,0)(20,0){4}{\put(20,20){\line(0,1){40}}}

\multiput(0,0)(0,20){2}{\multiput(0,0)(20,0){3}{\put(0,0){\drawline(20,20)(40,40)}}}
\multiput(0,0)(0,20){2}{\multiput(0,0)(20,0){3}{\put(0,0){\drawline(20,40)(40,20)}}}

\multiput(0,0)(20,0){4}{\multiput(20,20)(0,20){3}{\circle*{2.5}}}

\put(12,20){\line(0,1){40}}
\multiput(12,20)(0,20){3}{\circle*{2.5}}
\put(20,12){\line(1,0){60}}
\multiput(20,12)(20,0){4}{\circle*{2.5}}

\put(29,-7){$P_{4}\boxtimes P_{3}$}

\multiput(0,0)(0,20){3}{\put(140,20){\line(1,0){60}}}
\multiput(0,0)(20,0){4}{\put(140,20){\line(0,1){20}}}

\multiput(0,0)(20,0){3}{\put(0,0){\drawline(140,20)(160,40)}}
\multiput(0,0)(20,0){3}{\put(0,0){\drawline(140,40)(160,20)}}

\multiput(0,0)(20,0){4}{\multiput(140,20)(0,20){3}{\circle*{2.5}}}

\put(140,12){\line(1,0){60}}
\multiput(140,12)(20,0){4}{\circle*{2.5}}
\put(132,20){\line(0,1){20}}
\multiput(132,20)(0,20){3}{\circle*{2.5}}

\put(127.5,-7){$P_{4}\boxtimes (K_{1}\cup K_{2})$}

\multiput(0,0)(0,20){2}{\put(260,40){\line(1,0){60}}}
\multiput(0,0)(20,0){4}{\put(260,40){\line(0,1){20}}}

\multiput(0,0)(20,0){3}{\put(0,0){\drawline(260,40)(280,60)}}
\multiput(0,0)(20,0){3}{\put(0,0){\drawline(260,60)(280,40)}}

\multiput(0,0)(20,0){2}{\put(0,0){\drawline(260,40)(300,60)}}
\multiput(0,0)(20,0){2}{\put(0,0){\drawline(260,60)(300,40)}}

\drawline(260,40)(320,60)
\drawline(260,60)(320,40)

\cbezier[500](260,40)(270,33)(290,33)(300,40)
\cbezier[500](280,40)(290,33)(310,33)(320,40)
\cbezier[500](260,40)(280,25)(300,25)(320,40)

\cbezier[500](260,60)(270,67)(290,67)(300,60)
\cbezier[500](280,60)(290,67)(310,67)(320,60)
\cbezier[500](260,60)(280,75)(300,75)(320,60)

\multiput(0,0)(20,0){4}{\multiput(260,40)(0,20){2}{\circle*{2.5}}}
\multiput(252,40)(0,20){2}{\circle*{2.5}}

\drawline(260,21)(320,21)
\cbezier[500](260,21)(270,14)(290,14)(300,21)
\cbezier[500](280,21)(290,14)(310,14)(320,21)
\cbezier[500](260,21)(280,6)(300,6)(320,21)
\multiput(260,21)(20,0){4}{\circle*{2.5}}

\put(252,40){\line(0,1){20}}

\put(268,-7){$K_{4}\boxtimes P_{2}$}

\put(67,-32){Figure 1. Examples of strong product.}
\end{picture}
\end{center}

\

\

 In this paper, we are going to study on strong product. We obtain the exact value of the bondage number of the strong product of a complete graph and a path. Furthermore, we determine the exact value of the bondage number of the strong product of a complete graph and a special starlike tree.

\section{Preliminary}
First, let us introduce some notations and terminologies. Denote by $N_{G}(v)$ and $N_{G}[v]$ the open and closed neighborhood of vertex $v$ in $G$, respectively. For any $\emptyset\neq X\subseteq V(G)$, we denote by $G[X]$ the subgraph of $G$ induced by $X$. For any $Y_{1},Y_{2}\subseteq V(G)$, let $[Y_{1},Y_{2}]$ denote the set of edges with one end in $Y_{1}$ and the other in $Y_{2}$. Denote by $\underline{MDS}(G)$ the set of all the minimum dominating sets of $G$. That is, $\underline{MDS}(G)=\{D \ | \ D$ is a minimum dominating set of $G\}$. For any two graphs $G$ and $H$, if $v\in V(H)$ and $xy\in E(H)$, then we wtite $G\boxtimes \{v\}= G\boxtimes H[\{v\}]$ and $G\boxtimes \{xy\}= G\boxtimes H[\{x,y\}]$ for short, respectively.

Next, we state some useful results below.
A set $S \subseteq V(G)$ is called a {\it $k$-packing} of graph $G$
if $d(x,y)>k$ for every pair of distinct vertices $x, y \in S$. The
{\it $k$-packing number} $P_k(G)$ is the cardinality of a
maximum $k$-packing of $G$. Let $K_{m}$ and $P_{n}$ denote a complete graph and a path of order $m$ and $n$, respectively.
\begin{pro}\label{Tp2} \emph{\cite{Meir}}
For any tree $T$, $P_{2}(T)=\gamma(T)$.  \qed
\end{pro}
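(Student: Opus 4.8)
The plan is to establish the two inequalities $P_2(T)\le\gamma(T)$ and $\gamma(T)\le P_2(T)$ separately; only the second will use that $T$ is a tree.

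For the first inequality, which in fact holds for every graph $G$, I would start from a maximum $2$-packing $S$. If $x,y\in S$ are distinct then $d(x,y)>2$, so their closed neighborhoods $N[x]$ and $N[y]$ are disjoint, since a common vertex would force $d(x,y)\le 2$. Now let $D$ be any dominating set. To dominate each $s\in S$ the set $D$ must meet $N[s]$, and because the sets $\{N[s]:s\in S\}$ are pairwise disjoint, this already forces $|D|\ge|S|$. Taking $D$ minimum and $S$ maximum yields $\gamma(G)\ge P_2(G)$.

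For the reverse inequality I would give a constructive, strip-the-deepest-leaf argument that simultaneously produces a dominating set $D$ and a $2$-packing $S$ of equal size. Root $T$ at an arbitrary vertex and repeat the following until no vertices remain: choose a currently remaining vertex $u$ of maximum depth, let $v$ be its parent in $T$ (if $u$ is the root, the tree has shrunk to a single vertex and I put $u$ into both $S$ and $D$), add $u$ to $S$ and $v$ to $D$, and then delete from the current forest every remaining vertex of $N[v]$. Each step enlarges $S$ and $D$ by one, so $|S|=|D|$ at the end. Since every deleted vertex lies in $N[v]$ for the chosen $v\in D$, the set $D$ dominates $T$, whence $\gamma(T)\le|D|$.

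The heart of the argument, and the step I expect to be the main obstacle, is checking that $S$ really is a $2$-packing, because the selected vertices $u$ are only leaves of the shrinking forest, whereas the packing condition refers to distances in the whole tree $T$. Here I would use the deepest-first rule: if $u_i$ is chosen before $u_j$, then at step $i$ the vertex $u_i$ has no remaining descendants, so $u_j$ is not a descendant of $u_i$ and the $u_i$--$u_j$ path in $T$ leaves $u_i$ through its parent $v_i$; moreover $u_j$ survives the deletion of $N[v_i]$, so $u_j\notin N[v_i]$ and hence $d_T(v_i,u_j)\ge 2$. Combining these, $d_T(u_i,u_j)=1+d_T(v_i,u_j)\ge 3>2$, so $S$ is a $2$-packing and $P_2(T)\ge|S|$. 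The two inequalities together give $P_2(T)=\gamma(T)$. An essentially equivalent route is induction on $|V(T)|$, deleting $N[v]$ for the support vertex $v$ of a deepest leaf and applying the inductive hypothesis to the resulting forest, but one must again track that the newly added packing vertex stays at distance more than $2$ from those produced later.
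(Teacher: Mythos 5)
Your proof is correct, but it cannot be compared line-by-line with anything in the paper: the paper does not prove Proposition \ref{Tp2} at all, it simply imports the result from Meir and Moon \cite{Meir} and states it with a \qed. So what you have produced is a self-contained elementary proof of a fact the authors treat as a black box. Both halves of your argument check out. The inequality $P_{2}(G)\leq\gamma(G)$ via pairwise disjoint closed neighborhoods is the standard weak duality between $2$-packings and dominating sets, valid for every graph, exactly as you say. The converse is where the tree hypothesis enters, and your deepest-vertex greedy procedure handles it cleanly: the two delicate points are (i) that a later-chosen vertex $u_{j}$ cannot be a descendant of an earlier-chosen $u_{i}$ (since $u_{i}$ was deepest among surviving vertices when chosen, so it had no surviving proper descendants), which forces the $u_{i}$--$u_{j}$ path to pass through the parent $v_{i}$, and (ii) that $u_{j}$ survived the deletion of $N[v_{i}]$, giving $d_{T}(v_{i},u_{j})\geq 2$ and hence $d_{T}(u_{i},u_{j})=1+d_{T}(v_{i},u_{j})\geq 3$; you identified and settled both. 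Two minor points worth noting: the parent $v$ you place in $D$ may already have been deleted from the surviving forest, but that is harmless since $D$ only needs to be a subset of $V(T)$ dominating everything ever deleted; and the root case can only occur as the final step (the root is the unique vertex of depth $0$), so it creates no unchecked pairs in $S$. Your argument is essentially the classical inductive/greedy proof of the Meir--Moon theorem, so nothing is lost relative to the cited source; what it buys in the present context is that the paper's chain of results (Propositions \ref{Tp2}, \ref{strong-p2} and Theorem \ref{k-p-d}) becomes fully self-contained.
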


\begin{pro}\label{strong-p2} \emph{\cite{Nowakowski}}
If $H$ is a graph with $P_{2}(H)=\gamma(H)$, then $\gamma(G\boxtimes H)=\gamma(G)\gamma(H)$ for any graph $G$. \qed
\end{pro}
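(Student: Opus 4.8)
The plan is to prove the identity by establishing the two opposing inequalities separately, with the upper bound valid for arbitrary $G$ and $H$ and the hypothesis $P_{2}(H)=\gamma(H)$ entering only in the lower bound. The single structural fact I would record first is the closed-neighborhood description of the strong product: for every $(g,h)\in V(G\boxtimes H)$ one has $N_{G\boxtimes H}[(g,h)]=N_{G}[g]\times N_{H}[h]$, which follows at once from the three adjacency cases in the definition of $\boxtimes$. Thus a vertex $(g',h')$ dominates $(g,h)$ precisely when $g'\in N_{G}[g]$ and $h'\in N_{H}[h]$, and I would use this equivalence throughout.

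For the upper bound $\gamma(G\boxtimes H)\le\gamma(G)\gamma(H)$ I would take a minimum dominating set $D_{G}$ of $G$ and a minimum dominating set $D_{H}$ of $H$ and verify that $D_{G}\times D_{H}$ dominates $G\boxtimes H$: given $(g,h)$, choose $g'\in D_{G}\cap N_{G}[g]$ and $h'\in D_{H}\cap N_{H}[h]$, so that $(g',h')\in D_{G}\times D_{H}$ lies in $N_{G\boxtimes H}[(g,h)]$ by the description above. This produces a dominating set of size $\gamma(G)\gamma(H)$.

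The substance of the argument is the reverse inequality. I would fix a maximum $2$-packing $S$ of $H$, so that $|S|=P_{2}(H)$ and, since $d_{H}(x,y)>2$ forces $N_{H}[x]\cap N_{H}[y]=\emptyset$, the closed neighborhoods $\{N_{H}[s]:s\in S\}$ are pairwise disjoint. Given any minimum dominating set $D$ of $G\boxtimes H$, I would partition off the ``slabs'' $D_{s}=D\cap(V(G)\times N_{H}[s])$ for $s\in S$; the disjointness of the $N_{H}[s]$ makes these pairwise disjoint subsets of $D$. The crucial claim is that the projection of $D_{s}$ onto $V(G)$ dominates $G$: for each $g\in V(G)$ the vertex $(g,s)$ is dominated by some $(g',h')\in D$, and the neighborhood formula forces $g'\in N_{G}[g]$ and $h'\in N_{H}[s]$, whence $(g',h')\in D_{s}$ and $g'$ is the required $G$-dominator of $g$. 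Therefore $|D_{s}|\ge\gamma(G)$, and summing over the disjoint slabs gives $|D|\ge\sum_{s\in S}|D_{s}|\ge|S|\,\gamma(G)=P_{2}(H)\gamma(G)$.

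At this point the hypothesis does its only work: replacing $P_{2}(H)$ by $\gamma(H)$ turns the lower bound into $\gamma(H)\gamma(G)$, matching the upper bound and closing the proof. The one step carrying real content is thus the passage from a $2$-packing of $H$ to a family of disjoint slabs each forcing at least $\gamma(G)$ vertices of $D$; everything else is bookkeeping with the product neighborhood formula. I expect the main care to be needed in the slab-projection claim, namely in checking that the projection genuinely dominates \emph{all} of $G$ rather than merely the part of $G$ sitting over the vertex $s$ itself, which is where the need to dominate the entire column $\{(g,s):g\in V(G)\}$ is used.
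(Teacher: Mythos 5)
Your proof is correct and complete. Note, however, that the paper itself contains no proof of this proposition: it is imported from the cited reference (Nowakowski and Rall) and stated with a terminal \qed, so there is no internal argument to compare yours against. What you have written is essentially the standard proof of that cited result. Each step checks out: the closed-neighborhood formula $N_{G\boxtimes H}[(g,h)]=N_{G}[g]\times N_{H}[h]$ is exactly right for the strong product; the upper bound $\gamma(G\boxtimes H)\le\gamma(G)\gamma(H)$ via $D_{G}\times D_{H}$ holds for all pairs of graphs; and the lower bound is sound because distinct vertices of a $2$-packing have disjoint closed neighborhoods (a common neighbor would give distance at most $2$), the slabs $D_{s}=D\cap(V(G)\times N_{H}[s])$ are therefore pairwise disjoint, and domination of the column $\{(g,s):g\in V(G)\}$ forces the projection of $D_{s}$ to dominate $G$, giving $|D_{s}|\ge\gamma(G)$ and hence $|D|\ge P_{2}(H)\gamma(G)$. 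This in fact proves the general inequality $\gamma(G\boxtimes H)\ge P_{2}(H)\gamma(G)$, with the hypothesis $P_{2}(H)=\gamma(H)$ used only at the end to close the sandwich, exactly as you say. In effect you have supplied a self-contained proof of a result the paper treats as a black box.
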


\begin{pro}\label{p-d} \emph{\cite{Jacobson}} \
$\gamma(P_{n})=\lceil\frac{n}{3}\rceil$. \qed
\end{pro}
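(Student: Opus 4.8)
The plan is to establish matching lower and upper bounds for $\gamma(P_n)$, both equal to $\lceil n/3 \rceil$. Throughout I write $P_n$ with vertex set $\{v_1, v_2, \ldots, v_n\}$ and edges $v_i v_{i+1}$ for $1 \le i \le n-1$, so that every vertex has degree at most $2$.

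For the lower bound I would use a covering/counting argument. Since the maximum degree of $P_n$ is $2$, the closed neighborhood $N_{P_n}[v]$ of any vertex $v$ contains at most three vertices. If $D$ is any dominating set of $P_n$, then by definition $\bigcup_{v \in D} N_{P_n}[v] = V(P_n)$, and hence $n \le \sum_{v \in D} |N_{P_n}[v]| \le 3|D|$. Because $|D|$ is an integer, this forces $|D| \ge \lceil n/3 \rceil$, and in particular $\gamma(P_n) \ge \lceil n/3 \rceil$.

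For the upper bound I would exhibit a dominating set of exactly this size by a periodic construction. Partition the vertices into consecutive blocks of three, namely $\{v_1,v_2,v_3\}, \{v_4,v_5,v_6\}, \ldots$, with a possibly shorter final block when $3 \nmid n$. From each full block take its central vertex $v_{3j-1}$, which dominates precisely the three vertices $v_{3j-2}, v_{3j-1}, v_{3j}$ of that block, and from the final incomplete block take its first vertex. The union of closed neighborhoods of the chosen vertices then covers all of $V(P_n)$, and the number of chosen vertices equals the number of blocks, which is $\lceil n/3 \rceil$. Combining this with the lower bound gives $\gamma(P_n) = \lceil n/3 \rceil$.

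The only delicate point, and the step I would write out most carefully, is the treatment of the final incomplete block in the upper-bound construction. When $n \equiv 1 \pmod 3$ the last block is the single vertex $v_n$, which must be selected on its own; when $n \equiv 2 \pmod 3$ it is the pair $\{v_{n-1}, v_n\}$, whose first vertex $v_{n-1}$ dominates both. One must verify in each residue class that the selected vertices indeed dominate the tail of the path and that the total count is exactly $\lceil n/3 \rceil$ rather than one more. Apart from this routine case check, the argument is immediate.
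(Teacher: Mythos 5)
Your proposal is correct and complete. Note, however, that the paper itself offers no proof of this statement: Proposition \ref{p-d} is quoted from the literature (Jacobson and Kinch) with only a citation, so there is no in-paper argument to compare yours against. What you have written is the standard self-contained proof: the lower bound follows from the counting argument $n \le \sum_{v \in D}\lvert N_{P_n}[v]\rvert \le 3\lvert D\rvert$, valid because $\Delta(P_n) \le 2$ forces $\lvert N_{P_n}[v]\rvert \le 3$, and the upper bound follows from the periodic selection $v_2, v_5, v_8, \ldots$ together with the residue-class check on the final block. Your case analysis at the tail is right: for $n \equiv 1 \pmod 3$ the lone vertex $v_n$ must be added (consistent with the fact that $\lceil n/3 \rceil = (n-1)/3 + 1$ there), and for $n \equiv 2 \pmod 3$ the vertex $v_{n-1}$ covers both remaining vertices, so in every case the count is exactly $\lceil n/3 \rceil$. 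This fills in the citation correctly; no gaps.
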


\begin{thm}\label{k-p-d}
$\gamma(K_{m}\boxtimes P_{n})=\gamma(K_{m})\gamma(P_{n})=\lceil\frac{n}{3}\rceil$.
\end{thm}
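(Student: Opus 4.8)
The plan is to reduce the statement directly to the three preliminary propositions already at hand, since no independent combinatorial argument on $K_m \boxtimes P_n$ is really needed. The key observation is that a path $P_n$ is a tree, so Proposition~\ref{Tp2} applies to it and yields $P_2(P_n)=\gamma(P_n)$. This is precisely the hypothesis required to invoke Proposition~\ref{strong-p2}, so I would set $H=P_n$ and $G=K_m$ and conclude immediately that $\gamma(K_m\boxtimes P_n)=\gamma(K_m)\,\gamma(P_n)$.

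To finish, I would evaluate the two factors. For the complete graph, any single vertex is adjacent to all others, so $K_m$ has a dominating set of size one and $\gamma(K_m)=1$. For the path, Proposition~\ref{p-d} gives $\gamma(P_n)=\lceil n/3\rceil$. Multiplying, $\gamma(K_m\boxtimes P_n)=1\cdot\lceil n/3\rceil=\lceil n/3\rceil$, which is exactly the claimed equality $\gamma(K_m)\gamma(P_n)=\lceil n/3\rceil$.

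There is essentially no hard step here: the whole argument is a verification that the hypotheses of the cited results are met. The only point worth stating explicitly is why Proposition~\ref{strong-p2} is applicable, namely that $P_n$ being a tree forces $P_2(P_n)=\gamma(P_n)$ via Proposition~\ref{Tp2}; once that is noted, the identity $\gamma(G\boxtimes H)=\gamma(G)\gamma(H)$ does all the work, and the value $\lceil n/3\rceil$ drops out from $\gamma(K_m)=1$ together with Proposition~\ref{p-d}. In particular, I would not need any structural analysis of minimum dominating sets of the product, and the commutativity of the strong product is not even required since the tree factor already sits in the correct position.
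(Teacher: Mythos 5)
Your proposal is correct and is essentially identical to the paper's own proof, which also just observes that $\gamma(K_m)=1$ and chains Propositions~\ref{Tp2}, \ref{strong-p2} and \ref{p-d} (with $H=P_n$ a tree, so $P_2(P_n)=\gamma(P_n)$). Your writeup merely spells out the same one-line citation argument in more detail.
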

\begin{proof}
Since $\gamma(K_{m})=1$, the theorem follows immediately by Propositions \ref{Tp2} to \ref{p-d}.
\end{proof}

\begin{pro}\label{k-b} \emph{\cite{Fink}}
For $m\geq2$, $b(K_{m})=\lceil\frac{m}{2}\rceil$. \qed
\end{pro}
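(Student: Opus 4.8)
The plan is to reduce the computation of $b(K_m)$ to a classical covering quantity. First I would record the trivial fact that $\gamma(K_m)=1$: since every vertex of $K_m$ is adjacent to all others, any single vertex is a dominating set, and no dominating set can be empty. Consequently, for a set $S\subseteq E(K_m)$ the defining inequality $\gamma(K_m-S)>\gamma(K_m)$ holds if and only if $\gamma(K_m-S)\geq 2$, which in turn holds if and only if $K_m-S$ has no \emph{universal} vertex (a vertex adjacent to all others), since $\gamma(G)=1$ exactly when $G$ has such a vertex.

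The second step is to translate the ``no universal vertex'' condition into a condition on $S$ alone. A vertex $v$ fails to be universal in $K_m-S$ precisely when some vertex is non-adjacent to it there, i.e. precisely when at least one edge of $S$ is incident with $v$. Hence $K_m-S$ has no universal vertex if and only if every vertex of $K_m$ is incident with some edge of $S$; that is, if and only if $S$ is an edge cover of $K_m$. Therefore $b(K_m)$ equals the minimum cardinality of an edge cover of $K_m$.

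The final step is to evaluate this minimum. For the upper bound I would exhibit an explicit edge cover of size $\lceil m/2\rceil$: take a matching of $\lfloor m/2\rfloor$ independent edges, and when $m$ is odd adjoin one further edge incident with the single remaining vertex, so that all $m$ vertices are covered by $\lceil m/2\rceil$ edges; removing this $S$ leaves no universal vertex, so $\gamma(K_m-S)\geq 2$. For the lower bound I would observe that a single edge is incident with at most two vertices, so covering all $m$ vertices forces at least $\lceil m/2\rceil$ edges, whence any bondage edge set has size at least $\lceil m/2\rceil$. Combining the two bounds gives $b(K_m)=\lceil m/2\rceil$.

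I expect the only genuinely delicate point to be the equivalence established in the second step: one must verify carefully that destroying \emph{all} universal vertices is both necessary and sufficient for raising the domination number, not merely necessary, since a priori one might worry that $\gamma$ could stay equal to $1$ for some other reason. Once this equivalence is secured, the upper and lower bounds on the edge cover number are routine counting, with the odd/even case split being the only bookkeeping required.
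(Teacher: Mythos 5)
Your proposal is correct and complete: the equivalence ``$\gamma(K_m-S)>1$ iff $S$ is an edge cover of $K_m$'' is airtight, since $\gamma(G)=1$ exactly when $G$ has a universal vertex, and a vertex of $K_m$ stays universal after deleting $S$ exactly when no edge of $S$ touches it; the counting of a minimum edge cover of $K_m$ as $\lceil m/2\rceil$ is also right. Note that the paper itself offers no proof of this proposition --- it is quoted from Fink, Jacobson, Kinch and Roberts with a \emph{qed} symbol --- so there is no internal argument to compare against; your reduction of the bondage number to the edge cover number is precisely the standard argument for this fact, and it is the natural one.
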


\begin{pro}\label{p-b} \emph{\cite{Fink}} For $n\geq2,$
\begin{equation*}
 \ \ \ \ \ \ \ \ \ \ \ \ \ \ \ \ \ \ \ \ \ \ \ \ \ \ \ \ \ \ \ \ \ \ \ \ \ \ b(P_{n})=
\begin{cases}
2, & \text{if $n\equiv 1 \ (\emph{mod} \ 3);$}\\
1, & \text{otherwise.} \ \ \ \ \ \ \ \ \ \ \ \ \ \ \ \ \ \ \ \ \ \ \ \ \ \ \ \ \ \ \ \ \ \ \ \ \ \ \ \ \qed
\end{cases}
\end{equation*}

\end{pro}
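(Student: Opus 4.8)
The plan is to exploit the fact that deleting any set of edges from the path $P_n$, whose vertices I label $v_1, v_2, \ldots, v_n$ in order, produces a disjoint union of shorter subpaths, together with the additivity of the domination number over connected components. Combined with Proposition \ref{p-d}, this means the domination number of any graph obtained from $P_n$ by edge deletions is a sum of terms $\lceil \ell/3 \rceil$, one for each surviving subpath of order $\ell$. In particular, deleting the single edge $v_i v_{i+1}$ yields $P_i \cup P_{n-i}$, whose domination number is $\lceil i/3 \rceil + \lceil (n-i)/3 \rceil$.

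Since $n \geq 2$ guarantees at least one edge, the trivial bound $b(P_n) \geq 1$ always holds. For the case $n \not\equiv 1 \pmod 3$, I would delete the pendant edge $v_1 v_2$, isolating $v_1$ and leaving $P_{n-1}$; a direct check with Proposition \ref{p-d} shows $1 + \lceil (n-1)/3 \rceil = \lceil n/3 \rceil + 1 > \gamma(P_n)$ exactly when $n \equiv 0$ or $2 \pmod 3$, so $b(P_n) = 1$ in these cases. For the upper bound in the remaining case $n \equiv 1 \pmod 3$, write $n = 3k+1$ and delete both pendant edges $v_1 v_2$ and $v_{n-1} v_n$; this isolates the two endpoints and leaves $P_{n-2}$, and since $\lceil (n-2)/3 \rceil = k$ the resulting domination number is $k + 2 > k + 1 = \gamma(P_n)$, giving $b(P_n) \leq 2$.

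The main obstacle is the matching lower bound $b(P_n) \geq 2$ when $n \equiv 1 \pmod 3$: I must rule out every single-edge deletion. For this I would write $\lceil j/3 \rceil = (j + c_j)/3$, where $c_j \in \{0, 1, 2\}$ is the residue with $c_j \equiv -j \pmod 3$, so that the deletion of $v_i v_{i+1}$ gives domination number $(n + c_i + c_{n-i})/3$. Because $c_i + c_{n-i} \equiv -n \equiv 2 \pmod 3$ while $c_i + c_{n-i} \leq 4$, the sum $c_i + c_{n-i}$ is forced to equal $2$, whence $\lceil i/3 \rceil + \lceil (n-i)/3 \rceil = (3k+3)/3 = k+1 = \gamma(P_n)$ for every admissible $i$. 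Thus no single deleted edge raises the domination number, establishing $b(P_n) \geq 2$ and completing the determination $b(P_n) = 2$ for $n \equiv 1 \pmod 3$.
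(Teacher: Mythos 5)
Your proof is correct. Note that the paper itself offers no proof of this statement: it is quoted verbatim from Fink, Jacobson, Kinch and Roberts \cite{Fink}, so there is no internal argument to compare against; what you have produced is a complete, self-contained derivation of that cited result. The two pillars of your argument are both sound: (i) deleting edges from $P_n$ leaves a disjoint union of paths, over which $\gamma$ is additive, so by Proposition \ref{p-d} every edge-deleted graph has domination number equal to a sum of terms $\lceil \ell/3\rceil$; and (ii) the residue bookkeeping in the case $n\equiv 1 \pmod 3$, where writing $\lceil j/3\rceil=(j+c_j)/3$ with $c_j\in\{0,1,2\}$ forces $c_i+c_{n-i}=2$ (it is congruent to $-n\equiv 2 \pmod 3$ and at most $4$), hence $\lceil i/3\rceil+\lceil (n-i)/3\rceil=(n+2)/3=\gamma(P_n)$ for every single deleted edge $v_iv_{i+1}$, which is exactly the lower bound $b(P_n)\geq 2$ that is the only nontrivial part of the proposition. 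The explicit constructions (one pendant edge when $n\equiv 0,2 \pmod 3$, both pendant edges when $n\equiv 1 \pmod 3$) settle the matching upper bounds. This is essentially the standard argument one finds in the literature for this fact, executed cleanly and with all boundary cases ($1\leq i\leq n-1$, so both components are nonempty paths) covered.
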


\section{Bondage number of $K_{m}\boxtimes P_{n}$}
In this section, we always let $V(K_{m})=\{u_1,u_2,\ldots,u_m\}$, $V(P_{n})=\{v_1,v_2,\ldots,v_n\}$ and $G=K_{m}\boxtimes P_{n}$. Set $R_{i}=V(K_{m}\boxtimes \{v_{i}\})=\{(u_1,v_i), (u_2,v_i), \ldots, (u_m,v_i)\}$, $i=1,2,\ldots,n$. For any $1\leq i\leq j\leq n$, write $B_{i}^{j}=G[\bigcup\limits_{l=i}^{j} R_{l}]$. If $j-i\geq2$, then we
let $E^{\ast}(B_{i}^{j})=E(B_{i}^{j})-(E(B_{i}^{i})\cup E(B_{j}^{j}))$.

\subsection{Some properties of a minimum dominating set of $K_{m}\boxtimes H$ and $K_{m}\boxtimes P_{n}$}
\begin{lemma}\label{block-1,2}
Let $H$ be a graph, $v\in V(H)$ and $xy\in E(H)$. Then $K_{m}\boxtimes \{v\}\cong K_{m}$ and $K_{m}\boxtimes \{xy\}\cong K_{2m}$.
\end{lemma}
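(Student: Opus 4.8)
The plan is to unfold the shorthand notation introduced just before the lemma and then read off both isomorphisms directly from the definition of the strong product. By those conventions, $K_{m}\boxtimes \{v\}$ denotes $K_{m}\boxtimes H[\{v\}]$ and $K_{m}\boxtimes \{xy\}$ denotes $K_{m}\boxtimes H[\{x,y\}]$. Since the induced subgraph $H[\{v\}]$ is a single isolated vertex, it is isomorphic to $K_{1}$; and since $xy\in E(H)$, the induced subgraph $H[\{x,y\}]$ is two vertices joined by an edge, hence isomorphic to $K_{2}$. So the two claims reduce to $K_{m}\boxtimes K_{1}\cong K_{m}$ and $K_{m}\boxtimes K_{2}\cong K_{2m}$, which I would establish in turn.

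For the first, I would note that $V(K_{m}\boxtimes K_{1})=V(K_{m})\times\{v\}$ has exactly $m$ vertices, and that two distinct vertices $(u_{i},v)$ and $(u_{j},v)$ can only meet the adjacency condition through the clause ``$g_{1}g_{2}\in E(K_{m})$ and $h_{1}=h_{2}$'', since the second coordinate is forced to agree. As $K_{m}$ is complete, $u_{i}u_{j}\in E(K_{m})$ for all $i\neq j$, so every pair of distinct vertices is adjacent and the graph is $K_{m}$.

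For the second, $V(K_{m}\boxtimes K_{2})=V(K_{m})\times\{x,y\}$ has exactly $2m$ vertices, so it suffices to show that any two distinct vertices $(g_{1},h_{1})$ and $(g_{2},h_{2})$ are adjacent. The only step that requires any care is the case analysis across the three clauses of the strong-product definition: if $g_{1}=g_{2}$ and $h_{1}\neq h_{2}$, the first clause applies because $xy\in E(K_{2})$; if $g_{1}\neq g_{2}$ and $h_{1}=h_{2}$, the second clause applies because $K_{m}$ is complete; and if both coordinates differ, the third clause applies. Since these three cases exhaust all pairs of distinct vertices, every such pair is adjacent and the graph is $K_{2m}$.

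Because the whole argument is a direct consequence of the definitions, there is no real obstacle here; the one point worth checking explicitly is that the three adjacency clauses jointly cover every pair of distinct vertices in the $K_{2}$ factor, which the case analysis above confirms.
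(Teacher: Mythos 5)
Your proof is correct and follows exactly the route the paper takes: the paper's proof of this lemma is the one-line remark that it is immediate from the definition of the strong product, and your argument is simply that verification written out in full (unfolding the shorthand $K_{m}\boxtimes\{v\}=K_{m}\boxtimes H[\{v\}]$, $K_{m}\boxtimes\{xy\}=K_{m}\boxtimes H[\{x,y\}]$ and checking adjacency clause by clause). Nothing is missing; you have just made explicit what the authors left implicit.
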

\begin{proof}
It is immediate from the definition of strong product.
\end{proof}

\begin{lemma}\label{v}
Let $S\in\underline{MDS}(K_{m}\boxtimes H)$ and $v\in V(H)$. If $S\cap V(K_{m}\boxtimes \{v\})\neq\emptyset$ then $|S\cap V(K_{m}\boxtimes \{v\}|=1$.
\end{lemma}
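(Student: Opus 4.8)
The plan is to argue by contradiction: assuming that $S$ meets the fiber $V(K_{m}\boxtimes \{v\})$ in two or more vertices, I will produce a strictly smaller dominating set, contradicting the minimality of $S$. The engine of the argument is the elementary fact that in a strong product the closed neighborhood factors as a product, $N_{G\boxtimes H}[(g,h)]=N_{G}[g]\times N_{H}[h]$, which follows at once from the three cases in the definition of $\boxtimes$: a pair $(g',h')$ is adjacent to $(g,h)$ precisely when $g'\in N_{G}[g]$ and $h'\in N_{H}[h]$ (and not both coordinates equal).

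First I would record what this factorization says for the complete factor. For any index $i$ we have $N_{K_{m}\boxtimes H}[(u_{i},v)]=N_{K_{m}}[u_{i}]\times N_{H}[v]$; since $K_{m}$ is complete, $N_{K_{m}}[u_{i}]=V(K_{m})$ for every $i$, and hence $N_{K_{m}\boxtimes H}[(u_{i},v)]=V(K_{m})\times N_{H}[v]$. The right-hand side is independent of $i$, so every vertex of the fiber $V(K_{m}\boxtimes \{v\})$ has exactly the same closed neighborhood in $G=K_{m}\boxtimes H$. This is the structural heart of the lemma.

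Next, suppose toward a contradiction that $|S\cap V(K_{m}\boxtimes \{v\})|\geq 2$, and choose two distinct members $(u_{i},v)$ and $(u_{j},v)$. Set $S'=S\setminus\{(u_{j},v)\}$. Any vertex dominated by $(u_{j},v)$ lies in $N_{G}[(u_{j},v)]=N_{G}[(u_{i},v)]$ and is therefore dominated by $(u_{i},v)\in S'$; every vertex dominated by some other element of $S$ is still dominated by that same element of $S'$. Thus $S'$ is a dominating set of $G$ with $|S'|=|S|-1<\gamma(G)$, contradicting $S\in\underline{MDS}(G)$. Hence the intersection contains at most one vertex, and combined with the hypothesis that it is nonempty this yields $|S\cap V(K_{m}\boxtimes \{v\})|=1$.

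Since the entire argument rests only on the product formula for closed neighborhoods together with the completeness of $K_{m}$, I do not anticipate any genuine obstacle; the sole point deserving an explicit line is the neighborhood factorization, which is routine from the definition. It is worth emphasizing that completeness of $K_{m}$ is essential here: were the first factor an arbitrary graph, two vertices in the same fiber could have genuinely different closed neighborhoods, and the redundancy step would break down.
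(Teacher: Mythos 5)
Your proof is correct and follows essentially the same route as the paper's: assume two vertices of $S$ lie in the fiber $V(K_{m}\boxtimes\{v\})$ and delete one to contradict minimality. The only difference is that you explicitly justify the redundancy step via the factorization $N_{K_{m}\boxtimes H}[(u_{i},v)]=V(K_{m})\times N_{H}[v]$, which the paper's (terser) proof leaves implicit.
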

\begin{proof}
Assume to the contrary that $|S\cap V(K_{m}\boxtimes \{v\})|\geq2$ and let $(a_{1},b_{1}),(a_{2},b_{2})\in S\cap V(K_{m}\boxtimes \{v\})$. Then $S-\{(a_{1},b_{1})\}$ is still a dominating set of $K_{m}\boxtimes H$, which contradicts to the minimality of $S$.
\end{proof}

\begin{lemma}\label{st}
Let $S\in\underline{MDS}(K_{m}\boxtimes H)$. If $s_{0}$ is a vertex of $H$ with degree one and $t_{0}$ is a neighbor of $s_{0}$ in $H$, then $|S\cap V(K_{m}\boxtimes \{s_{0}t_{0}\}|=1$.
\end{lemma}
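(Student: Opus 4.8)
The plan is to exploit the hypothesis that $s_0$ has a unique neighbor $t_0$ in $H$, which forces the fiber over $s_0$ to interact with the rest of $K_m\boxtimes H$ only through the block $B:=V(K_m\boxtimes\{s_0t_0\})$; by Lemma \ref{block-1,2} this block induces a $K_{2m}$. First I would record the relevant local structure of the two fibers. For any vertex $(u,s_0)$ of $K_m\boxtimes\{s_0\}$, unwinding the definition of the strong product and using that the only $H$-neighbor of $s_0$ is $t_0$, the closed neighborhood $N_{K_m\boxtimes H}[(u,s_0)]$ consists of all $(u',s_0)$ with $u'\neq u$ together with all $(u',t_0)$, and nothing else; since $B$ induces a complete graph, this means $N_{K_m\boxtimes H}[(u,s_0)]=B$ for every such $u$. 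Symmetrically, each vertex $(u,t_0)$ satisfies $B\subseteq N_{K_m\boxtimes H}[(u,t_0)]$ (it may have additional neighbors outside $B$ arising from the other $H$-neighbors of $t_0$, but that will not matter).

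For the lower bound $|S\cap B|\geq 1$: since $S$ dominates some vertex $(u,s_0)$ and $N_{K_m\boxtimes H}[(u,s_0)]=B$, the set $S$ must contain a vertex of $B$. For the upper bound $|S\cap B|\leq 1$, I would first invoke Lemma \ref{v} to see that $S$ meets each of the two fibers $K_m\boxtimes\{s_0\}$ and $K_m\boxtimes\{t_0\}$ in at most one vertex, so that $|S\cap B|\leq 2$; it then suffices to rule out $|S\cap B|=2$. Suppose to the contrary that $(a,s_0),(b,t_0)\in S$. I claim $S\setminus\{(a,s_0)\}$ is still a dominating set of $K_m\boxtimes H$: the only vertices whose domination could possibly depend on $(a,s_0)$ lie in $N_{K_m\boxtimes H}[(a,s_0)]=B$, and every vertex of $B$ is already dominated by $(b,t_0)$ because $B\subseteq N_{K_m\boxtimes H}[(b,t_0)]$. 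This contradicts the minimality of $S$, and hence $|S\cap B|=1$.

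The step requiring the most care is the redundancy argument in the upper bound, namely verifying that deleting the $s_0$-fiber vertex leaves no vertex undominated. This is exactly where the assumption $\deg_H(s_0)=1$ is essential: it guarantees that $(a,s_0)$ has no neighbors outside $B$, so its entire ``dominating responsibility'' is confined to $B$ and can be absorbed by any vertex lying in the $t_0$-fiber. I expect the rest to be routine once the identity $N_{K_m\boxtimes H}[(u,s_0)]=B$ and the containment $B\subseteq N_{K_m\boxtimes H}[(b,t_0)]$ are established cleanly from Lemma \ref{block-1,2}.
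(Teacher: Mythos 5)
Your proof is correct and follows essentially the same route as the paper: the lower bound comes from the fact that the closed neighborhood of any vertex in the $s_0$-fiber is exactly $V(K_m\boxtimes\{s_0t_0\})$, and the upper bound uses Lemma \ref{v} to place the two hypothetical vertices in different fibers and then deletes the $s_0$-fiber vertex as redundant, contradicting minimality of $S$. Your write-up just makes explicit the neighborhood identities that the paper leaves implicit.
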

\begin{proof}
Clearly, $|S\cap V(K_{m}\boxtimes\{s_{0}t_{0}\})|\geq1$. Otherwise, no element of $S$ can dominate $V(K_{m}\boxtimes\{s_{0}\})$ in $K_{m}\boxtimes H$.
Next, we need to prove that $|S\cap V(K_{m}\boxtimes\{s_{0}t_{0}\})|\leq1$. Assume to the contrary that $|S\cap V(K_{m}\boxtimes\{s_{0}t_{0}\})|\geq2$ and let $(a_{1},b_{1}),(a_{2},b_{2})\in S\cap V(K_{m}\boxtimes\{s_{0}t_{0}\})$. By Lemma \ref{v}, we can suppose without loss of generality that $(a_{1},b_{1})\in S\cap V(K_{m}\boxtimes\{s_{0}\})$ and $(a_{2},b_{2})\in S\cap V(K_{m}\boxtimes\{t_{0}\})$. But now, we have that $S-\{(a_{1},b_{1})\}$ is a dominating set of $K_{m}\boxtimes H$, which is contrary to the minimality of $S$.
\end{proof}

\begin{lemma}\label{block-d}
Let $D\in \underline{MDS}(G)$. Then $|D\cap V(B_{1}^{i})|\geq \gamma(B_{1}^{i-1})$ for every $1<i\leq n$ and $|D\cap V(B_{j}^{n})|\geq \gamma(B_{j+1}^{n})$ for every $1\leq j<n$.
\end{lemma}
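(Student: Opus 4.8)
The plan is to deduce each of the two inequalities from a single idea: starting from the given $D\in\underline{MDS}(G)$, I will build an explicit dominating set of the smaller block $B_{1}^{i-1}$ (respectively $B_{j+1}^{n}$) whose cardinality does not exceed $|D\cap V(B_{1}^{i})|$ (respectively $|D\cap V(B_{j}^{n})|$). Since reversing the path (sending $v_{l}\mapsto v_{n+1-l}$) is an automorphism of $G$ carrying $B_{j}^{n}$ to $B_{1}^{\,n+1-j}$ and $B_{j+1}^{n}$ to $B_{1}^{\,n-j}$, the two statements are mirror images of each other, so I will only argue the first and invoke symmetry for the second.

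The engine is a structural fact about closed neighborhoods. From the definition of the strong product together with the completeness of $K_{m}$, every vertex satisfies $N_{G}[(u_{a},v_{l})]=R_{l-1}\cup R_{l}\cup R_{l+1}$, with the convention $R_{0}=R_{n+1}=\emptyset$; that is, one vertex sitting in fiber $R_{l}$ dominates the three consecutive fibers $R_{l-1},R_{l},R_{l+1}$ in their entirety. In particular, for $l\le i-1$ any vertex of $R_{l}$ can only be dominated by vertices of $R_{1}\cup\cdots\cup R_{i}=V(B_{1}^{i})$, so the set $D':=D\cap V(B_{1}^{i})$ already dominates every vertex of $B_{1}^{i-1}$ inside $G$.

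The only obstruction to calling $D'$ a dominating set of $B_{1}^{i-1}$ is that $D'$ may contain vertices of the fiber $R_{i}$, which are not vertices of $B_{1}^{i-1}$. I remove this by projection: let $\phi\colon V(B_{1}^{i})\to V(B_{1}^{i-1})$ be the identity on $V(B_{1}^{i-1})$ and send each $(u_{a},v_{i})\in R_{i}$ to $(u_{a},v_{i-1})\in R_{i-1}$, and put $D'':=\phi(D')\subseteq V(B_{1}^{i-1})$. To verify that $D''$ dominates $B_{1}^{i-1}$, take $w\in R_{l}$ with $l\le i-1$ and a dominator $d\in D'$ of $w$. If $d\notin R_{i}$, then $\phi(d)=d\in D''$ still dominates $w$ inside the induced subgraph. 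If $d\in R_{i}$, then $w\in N_{G}[d]=R_{i-1}\cup R_{i}\cup R_{i+1}$ together with $l\le i-1$ forces $l=i-1$, i.e. $w\in R_{i-1}$; and $\phi(d)\in R_{i-1}$ dominates all of $R_{i-1}$ by the neighborhood formula, so it dominates $w$. Hence $D''$ is a dominating set of $B_{1}^{i-1}$.

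Since the image of a finite set under any map has cardinality at most that of the set, $|D''|\le|D'|=|D\cap V(B_{1}^{i})|$, and therefore $\gamma(B_{1}^{i-1})\le|D''|\le|D\cap V(B_{1}^{i})|$, which is the first inequality; the second is obtained verbatim after the path reversal described above. I expect the single genuine subtlety to be the boundary accounting at the fiber $R_{i}$: one must check that projecting $R_{i}$ onto $R_{i-1}$ cannot destroy the domination of any vertex of $B_{1}^{i-1}$. This is exactly what the neighborhood formula secures, because a vertex of $B_{1}^{i-1}$ that was dominated from $R_{i}$ is forced to lie in $R_{i-1}$ and is re-dominated by the projected vertex; everything else reduces to the elementary observation that closed neighborhoods in $K_{m}\boxtimes P_{n}$ span exactly three consecutive fibers.
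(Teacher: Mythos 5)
Your proof is correct, but it takes a genuinely different route from the paper's. The paper argues by a case analysis on whether $D\cap V(B_{i}^{i})$ is empty: if it is nonempty, then $D\cap V(B_{1}^{i})$ dominates all of $B_{1}^{i}$ (a vertex in the fiber $R_{i}$ dominates the whole fiber, which induces $K_{m}$), and the paper then invokes the already-established formula $\gamma(K_{m}\boxtimes P_{i})=\lceil\frac{i}{3}\rceil$ (Theorem \ref{k-p-d}) to conclude $|D\cap V(B_{1}^{i})|\geq\gamma(B_{1}^{i})=\lceil\frac{i}{3}\rceil\geq\lceil\frac{i-1}{3}\rceil=\gamma(B_{1}^{i-1})$; if it is empty, then $D\cap V(B_{1}^{i})=D\cap V(B_{1}^{i-1})$ is itself a dominating set of $B_{1}^{i-1}$. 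Your folding map $\phi$, which projects $R_{i}$ onto $R_{i-1}$, replaces both the case split and the appeal to the domination-number formula: it produces in one stroke an explicit dominating set of $B_{1}^{i-1}$ of cardinality at most $|D\cap V(B_{1}^{i})|$, using only the neighborhood identity $N_{G}[(u_{a},v_{l})]=R_{l-1}\cup R_{l}\cup R_{l+1}$ and the completeness of each fiber. What the paper's version buys is brevity, since Theorem \ref{k-p-d} is available anyway and the ceiling inequality does all the work; what your version buys is self-containedness and generality — it is a pure retraction argument that never needs the exact value of any domination number, so it would survive in settings (e.g.\ other second factors than a path, or blocks whose domination number is unknown) where the paper's shortcut is unavailable. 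Both proofs use the same path-reversal symmetry to obtain the second inequality, and your boundary check at $R_{i}$ (a vertex of $B_{1}^{i-1}$ dominated from $R_{i}$ must lie in $R_{i-1}$, where the projected vertex re-dominates it) is exactly the point that makes the folding legitimate.
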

\begin{proof}
By symmetry, we only prove that $|D\cap V(B_{1}^{i})|\geq \gamma(B_{1}^{i-1})$. Note that $B_{i}^{i}\cong K_{m}$ and $B_{1}^{i-1}\cong K_{m}\boxtimes P_{i-1}$. If $D\cap V(B_{i}^{i})\neq\emptyset$, then $D\cap V(B_{1}^{i})$ is a dominating set of $B_{1}^{i}$, and so $|D\cap V(B_{1}^{i})|\geq\gamma(B_{1}^{i})=\lceil\frac{i}{3}\rceil\geq\lceil\frac{i-1}{3}\rceil=\gamma(B_{1}^{i-1})$; if $D\cap V(B_{i}^{i})=\emptyset$, then $D\cap V(B_{1}^{i})=D\cap V(B_{1}^{i-1})$ is a dominating set of $B_{1}^{i-1}$, and so $|D\cap V(B_{1}^{i})|\geq\gamma(B_{1}^{i-1})$.
\end{proof}

\begin{lemma}\label{block-0}
Let $D\in \underline{MDS}(G)$.\\
(a) If $n\equiv0$ \emph{(mod 3),} then $|D\cap V(B_{i}^{i})|=0$ for every $1\leq i\leq n$ with $i\equiv0, 1$ \emph{(mod 3).}
(b) If $n\equiv2$ \emph{(mod 3),} then $|D\cap V(B_{i}^{i})|=0$ for every $1\leq i\leq n$ with $i\equiv0$ \emph{(mod 3).}
\end{lemma}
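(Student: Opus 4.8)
The plan is to turn the statement into a counting argument on the ``layers'' $R_1,\dots,R_n$. The starting point is that, by Theorem~\ref{k-p-d}, every $D\in\underline{MDS}(G)$ has exactly $|D|=\lceil\frac{n}{3}\rceil$ vertices, while Lemma~\ref{v} gives $|D\cap V(B_i^i)|=|D\cap R_i|\le 1$ for each $i$; thus $D$ meets each layer in at most one vertex, and I may call a layer \emph{occupied} or not. In both parts the strategy is the same: assume for contradiction that some forbidden layer $R_{i_0}$ is occupied, split $D$ according to the three layer-ranges $\{1,\dots,i_0-1\}$, $\{i_0\}$, $\{i_0+1,\dots,n\}$,
\[
|D\cap V(B_1^{i_0-1})|+|D\cap R_{i_0}|+|D\cap V(B_{i_0+1}^{n})|=|D|=\Big\lceil\tfrac{n}{3}\Big\rceil ,
\]
and use $|D\cap R_{i_0}|=1$ to obtain $|D\cap V(B_1^{i_0-1})|+|D\cap V(B_{i_0+1}^{n})|=\lceil\frac{n}{3}\rceil-1$.

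The heart of the argument is to lower-bound the two flanking terms by applying Lemma~\ref{block-d} with \emph{shifted} indices: taking $i=i_0-1$ in its first inequality gives $|D\cap V(B_1^{i_0-1})|\ge\gamma(B_1^{i_0-2})$, and taking $j=i_0+1$ in its second inequality gives $|D\cap V(B_{i_0+1}^{n})|\ge\gamma(B_{i_0+2}^{n})$. Since $B_1^{i_0-2}\cong K_m\boxtimes P_{i_0-2}$ and $B_{i_0+2}^{n}\cong K_m\boxtimes P_{n-i_0-1}$, Theorem~\ref{k-p-d} evaluates these as $\lceil\frac{i_0-2}{3}\rceil$ and $\lceil\frac{n-i_0-1}{3}\rceil$. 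The key arithmetic point is that, precisely for the residues excluded in the statement, dropping the extra layer costs nothing in the ceiling: for $i_0\equiv0\pmod 3$ one has $\lceil\frac{i_0-2}{3}\rceil=\frac{i_0}{3}$, and when $n\equiv0\pmod3$ also $\lceil\frac{n-i_0-1}{3}\rceil=\frac{n-i_0}{3}$, so the two lower bounds already sum to $\frac{i_0}{3}+\frac{n-i_0}{3}=\frac{n}{3}=\lceil\frac{n}{3}\rceil$. This contradicts the total $\lceil\frac{n}{3}\rceil-1$ computed above. The cases $i_0\equiv1\pmod3$ (part (a)) and $i_0\equiv0\pmod3$ with $n\equiv2\pmod3$ (part (b)) are identical in form; only the ceiling evaluations change, and in each the two bounds again sum to exactly $\lceil\frac{n}{3}\rceil$, one more than is available.

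What will require care rather than cleverness is the bookkeeping at the ends of the path. Lemma~\ref{block-d} requires $1<i\le n$ and $1\le j<n$, so I would verify these index restrictions hold for the shifts above; when $i_0=1$ (which occurs only in part (a), for $i_0\equiv1$) the lower flank $B_1^{i_0-1}$ is empty and contributes $0$, matching the relevant ceiling, so no separate treatment is needed. The genuine boundary case is part (a) with $i_0=n$ (possible only when $i_0\equiv0\pmod3$), where the upper flank is empty; there I would instead apply Lemma~\ref{block-d} with $i=n-1$ to get $|D\cap V(B_1^{n-1})|\ge\gamma(B_1^{n-2})=\lceil\frac{n-2}{3}\rceil=\frac{n}{3}$, which already contradicts $|D\cap V(B_1^{n-1})|=|D|-1=\frac{n}{3}-1$. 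I expect the only (minor) obstacle to be organizing these residue-by-residue ceiling computations and the index/boundary checks cleanly, since the underlying contradiction is always the same: the two flanks are forced to hold one vertex too many.
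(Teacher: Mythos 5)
Your proposal is correct and takes essentially the same route as the paper: its proof also decomposes $D$ over $V(B_{1}^{i-1})$, $V(B_{i}^{i})$, $V(B_{i+1}^{n})$, applies Lemma \ref{block-d} at the shifted indices $i-1$ and $i+1$, and evaluates $\gamma(B_{1}^{i-2})+\gamma(B_{i+2}^{n})=\lceil\frac{i-2}{3}\rceil+\lceil\frac{n-i-1}{3}\rceil=\lceil\frac{n}{3}\rceil$ via Theorem \ref{k-p-d} to force $|D\cap V(B_{i}^{i})|=0$. Your explicit contradiction phrasing and your handling of the endpoints $i=1$ and $i=n$ (the paper uses a two-block split plus symmetry there, you use an empty flank and Lemma \ref{block-d} at $i=n-1$) are only cosmetic differences.
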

\begin{proof}
If $n\equiv0$ (mod 3) and $i=1$, then by Lemma \ref{block-d} and Theorem \ref{k-p-d}, we have $|D|= |D\cap V(B_{1}^{1})|+|D\cap V(B_{2}^{n})|\geq0+\gamma(B_{3}^{n})=\gamma(K_{m}\boxtimes P_{n-2})=\lceil\frac{n-2}{3}\rceil=\lceil\frac{n}{3}\rceil=|D|$, which implies that $|D\cap V(B_{1}^{1})|=0$. By symmetry, if $n\equiv0$ (mod 3) and $i=n$, we can get that $|D\cap V(B_{n}^{n})|=0$.

If $n\equiv0$ (mod 3) and $1< i< n$ with $i\equiv0, 1$ (mod 3), or $n\equiv2$ (mod 3) and $1\leq i\leq n$ with $i\equiv0$ (mod 3), then again
by Lemma \ref{block-d} and Theorem \ref{k-p-d}, we have $|D|= |D\cap V(B_{1}^{i-1})|+|D\cap V(B_{i}^{i})|+|D\cap V(B_{i+1}^{n})|\geq\gamma(B_{1}^{i-2})+0+\gamma(B_{i+2}^{n})=\lceil\frac{i-2}{3}\rceil+\lceil\frac{n-(i+1)}{3}\rceil=\lceil\frac{n}{3}\rceil=|D|$, which implies that $|D\cap V(B_{i}^{i})|=0$.
\end{proof}

\subsection{Upper bound of the bondage number of $K_{m}\boxtimes P_{n}$}
Let $H$ be a graph, $v\in V(H)$ and $xy\in E(H)$. We define two subsets $Z_{v}^{-}$ and $Z_{xy}^{\thinspace\shortmid}$ of $E(K_{m}\boxtimes \{v\})$ and  $E(K_{m}\boxtimes \{xy\})$ respectively as follows:
\begin{equation*}
Z_{v}^{-}=
\begin{cases}
\{(u_{1},v)(u_{2},v),(u_{3},v)(u_{4},v),\ldots,(u_{m-2},v)(u_{m-1},v)\}\\
\ \ \ \cup \ \{(u_{m-1},v)(u_{m},v)\},  &\text{if $m$ is odd and $m\geq3$;}\\
\{(u_{1},v)(u_{2},v),(u_{3},v)(u_{4},v),\ldots,(u_{m-1},v)(u_{m},v)\}, &\text{if $m$ is even.}
\end{cases}\\
\end{equation*}
\begin{center}
\ \ $Z_{xy}^{\thinspace\shortmid}=\{(u_{1},x)(u_{1},y),(u_{2},x)(u_{2},y),\ldots,(u_{m},x)(u_{m},y)\}$. \ \ \ \ \ \ \ \ \ \ \ \ \  \ \ \ \ \ \ \ \ \ \ \ \ \ \ \ \ \ \ \ \ \ \ \ \ \ \ \ \ \ \ \ \ \ \
\end{center}
\begin{lemma}\label{vxy}
Let $H$ be a graph, $v\in V(H)$ and $xy\in E(H)$. Then $Z_{v}^{-}$ and $Z_{xy}^{\thinspace\shortmid}$ are bondage edge sets of $K_{m}\boxtimes \{v\}$ and $K_{m}\boxtimes \{xy\}$, respectively.
\end{lemma}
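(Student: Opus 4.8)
The plan is to reduce both claims to one elementary fact about complete graphs. For a complete graph $K_N$ we have $\gamma(K_N)=1$, and for an edge set $S$ a single vertex $w$ dominates $K_N-S$ if and only if $w$ remains adjacent to every other vertex, i.e.\ $w$ is the endpoint of no edge of $S$. Hence, if $S$ is an \emph{edge cover} of $K_N$ (every vertex is an endpoint of some edge of $S$), then no single vertex dominates $K_N-S$, so $\gamma(K_N-S)\geq 2>1=\gamma(K_N)$, and $S$ is a bondage edge set. The whole proof is then just a matter of recognizing $Z_{v}^{-}$ and $Z_{xy}^{\thinspace\shortmid}$ as edge covers of the appropriate complete graph.

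First I would invoke Lemma~\ref{block-1,2} to identify $K_{m}\boxtimes\{v\}\cong K_{m}$ and $K_{m}\boxtimes\{xy\}\cong K_{2m}$, so that $\gamma(K_{m}\boxtimes\{v\})=\gamma(K_{m}\boxtimes\{xy\})=1$. By the fact above it then suffices to verify that $Z_{v}^{-}$ and $Z_{xy}^{\thinspace\shortmid}$ cover all the vertices of $K_{m}\boxtimes\{v\}$ and $K_{m}\boxtimes\{xy\}$, respectively.

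For $Z_{xy}^{\thinspace\shortmid}$ this is immediate: its edges $(u_{i},x)(u_{i},y)$, $i=1,\dots,m$, are pairwise vertex-disjoint and together incident with all $2m$ vertices of $K_{m}\boxtimes\{xy\}$, so they form a perfect matching and hence an edge cover. For $Z_{v}^{-}$ I would split into the two parity cases of its definition. When $m$ is even, the listed edges $(u_{1},v)(u_{2},v),\dots,(u_{m-1},v)(u_{m},v)$ form a perfect matching covering all $m$ vertices. When $m\geq 3$ is odd, the edges $(u_{1},v)(u_{2},v),\dots,(u_{m-2},v)(u_{m-1},v)$ cover $u_{1},\dots,u_{m-1}$, and the extra edge $(u_{m-1},v)(u_{m},v)$ covers the last vertex $u_{m}$ (with $u_{m-1}$ covered a second time); thus all $m$ vertices are again covered. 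In either case $Z_{v}^{-}$ is an edge cover, and we conclude that both $Z_{v}^{-}$ and $Z_{xy}^{\thinspace\shortmid}$ are bondage edge sets.

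The argument is routine and I anticipate no real obstacle; the only point requiring care is the parity bookkeeping in the odd case, where one must confirm that the single appended edge $(u_{m-1},v)(u_{m},v)$ is precisely what brings the otherwise-uncovered vertex $u_{m}$ into the cover. I would also note the tacit assumption $m\geq 2$, since for $m=1$ the graph $K_{m}\boxtimes\{v\}\cong K_{1}$ is edgeless and $Z_{v}^{-}$ is not defined. As a byproduct, one sees $|Z_{v}^{-}|=\lceil\frac{m}{2}\rceil$ and $|Z_{xy}^{\thinspace\shortmid}|=m$, so that by Proposition~\ref{k-b} the set $Z_{v}^{-}$ is in fact a \emph{minimum} bondage edge set of $K_{m}$; this observation is not needed for the statement but is what makes these sets the right building blocks for the later constructions.
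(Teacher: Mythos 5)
Your proof is correct and follows essentially the same route as the paper's: identify $K_{m}\boxtimes\{v\}\cong K_{m}$ and $K_{m}\boxtimes\{xy\}\cong K_{2m}$ via Lemma~\ref{block-1,2}, observe that $Z_{v}^{-}$ and $Z_{xy}^{\thinspace\shortmid}$ cover all vertices of these complete graphs, and conclude that no single vertex can dominate after their removal, so the domination number rises above $1$. Your write-up merely makes explicit what the paper leaves implicit (the ``edge cover of a complete graph is a bondage set'' fact, the parity check for $Z_{v}^{-}$, and the tacit assumption $m\geq 2$), which is fine.
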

\begin{proof}
Since $K_{m}\boxtimes \{v\}\cong K_{m}$ and $Z_{v}^{-}$ covers all the vertices of $K_{m}\boxtimes \{v\}$, we have $\gamma(K_{m}\boxtimes \{v\}-Z_{v}^{-})>1=\gamma(K_{m}\boxtimes \{v\})$. That is to say, $Z_{v}^{-}$ is a bondage edge set of $K_{m}\boxtimes \{v\}$.

Since $K_{m}\boxtimes \{xy\}\cong K_{2m}$ and $Z_{xy}^{\thinspace\shortmid}$ covers all the vertices of $K_{m}\boxtimes \{xy\}$, similarly, we can get that $Z_{xy}^{\thinspace\shortmid}$ is a bondage edge set of $K_{m}\boxtimes \{xy\}$.
\end{proof}

\begin{lemma}\label{upper-3}
If $m\geq2$ and $n\equiv 0$ \emph{(mod 3)}, then $b(K_{m}\boxtimes P_{n})\leq \lceil\frac{m}{2}\rceil$.
\end{lemma}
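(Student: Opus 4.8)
The plan is to exhibit one specific bondage edge set of size exactly $\lceil\frac{m}{2}\rceil$, namely $S=Z_{v_2}^{-}$, the edge cover of the layer $R_2$ defined just before Lemma \ref{vxy}. Recall that $|Z_{v_2}^{-}|=\lceil\frac{m}{2}\rceil$ (a perfect matching on $R_2$ when $m$ is even, and a near-perfect matching together with one overlapping edge when $m$ is odd) and that $Z_{v_2}^{-}$ covers every vertex of $R_2$. Since $\gamma(G)=\frac{n}{3}$ by Theorem \ref{k-p-d}, it suffices to prove that $\gamma(G-S)>\frac{n}{3}$, and the whole argument reduces to showing that deleting $S$ leaves some vertex of $R_2$ impossible to dominate by a set of size $\frac{n}{3}$.

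First I would argue by contradiction: assume $\gamma(G-S)=\frac{n}{3}$ and fix a minimum dominating set $D$ of $G-S$ with $|D|=\frac{n}{3}$. The key preliminary observation is that removing edges cannot help domination, so $D$ is still a dominating set of $G$; as $|D|=\gamma(G)$, we conclude $D\in\underline{MDS}(G)$. This is exactly what is needed to bring the structural results to bear on $D$.

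Next I would read off the structure of $D$ around the second layer. Because $n\equiv0\pmod 3$, Lemma \ref{block-0}(a) forces $D\cap R_1=\emptyset$ and $D\cap R_3=\emptyset$ (the indices satisfy $1\equiv1$ and $3\equiv0\pmod 3$). In $G$, every vertex of $R_2$ has all of its neighbours inside $R_1\cup R_2\cup R_3$, so the only vertices of $D$ capable of dominating $R_2$ must lie in $R_2$ itself; hence $D\cap R_2\neq\emptyset$, and Lemma \ref{v} then yields $D\cap R_2=\{(u_k,v_2)\}$ for a unique index $k$.

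The final step is the edge-cover argument, and this is where the choice of $S$ is used. Since $Z_{v_2}^{-}$ covers $(u_k,v_2)$, some edge $(u_k,v_2)(u_{k'},v_2)\in S$ with $k'\neq k$ has been deleted. I then check that $(u_{k'},v_2)$ is undominated in $G-S$: it is not in $D$ (as $D\cap R_2=\{(u_k,v_2)\}$), its only potential dominator in $D$ is $(u_k,v_2)$ because all its remaining $G$-neighbours lie in $R_1\cup R_3$, which meet $D$ trivially, and precisely that edge has been removed. This contradicts $D$ being a dominating set of $G-S$, so $\gamma(G-S)>\frac{n}{3}=\gamma(G)$, whence $b(G)\le|S|=\lceil\frac{m}{2}\rceil$. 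I expect no serious obstacle; the one point that must be handled with care is establishing $D\cap R_2\neq\emptyset$ (so that the vertex $(u_k,v_2)$ exists at all), which follows exactly because Lemma \ref{block-0}(a) empties $R_1$ and $R_3$ of $D$-vertices and thereby leaves $R_2$ to dominate itself.
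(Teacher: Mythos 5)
Your proposal is correct and takes essentially the same route as the paper: remove $Z_{v_{2}}^{-}$, note that a minimum dominating set $D$ of $G-Z_{v_{2}}^{-}$ would lie in $\underline{MDS}(G)$, and then use Lemma \ref{block-0}(a) together with Lemma \ref{v} to force $D\cap R_{2}$ to be a single vertex. The only (cosmetic) difference is the final contradiction: the paper invokes Lemma \ref{vxy}, i.e.\ that $Z_{v_{2}}^{-}$ is a bondage edge set of $B_{2}^{2}$, whereas you unfold that lemma's covering argument and exhibit the undominated vertex $(u_{k'},v_{2})$ explicitly.
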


\begin{proof}
Since $|Z_{v_{2}}^{-}|=\lceil\frac{m}{2}\rceil$, it suffice to prove that $\gamma(G-Z_{v_{2}}^{-})>\gamma(G)$. Suppose to the contrary that $\gamma(G-Z_{v_{2}}^{-})\leq\gamma(G)$, which implies that $\gamma(G-Z_{v_{2}}^{-})=\gamma(G)$. So, for $D\in \underline{MDS}(G-Z_{v_{2}}^{-})$, we have $D\in \underline{MDS}(G)$.

 From Lemma \ref{block-0} (a), we obtain that $|D\cap V(B_{1}^{1})|=0=|D\cap V(B_{3}^{3})|$, which implies that $D\cap V(B_{2}^{2})\neq\emptyset$, and so we have $|D\cap V(B_{2}^{2})|=1$ by Lemma \ref{v}. Since $|D\cap V(B_{1}^{1})|=0=|D\cap V(B_{3}^{3})|$, it follows that $D\cap V(B_{2}^{2})$ is a dominating set of $B_{2}^{2}-Z_{v_{2}}^{-}$. Hence $\gamma(B_{2}^{2}-Z_{v_{2}}^{-})\leq |D\cap V(B_{2}^{2})|=1=\gamma(B_{2}^{2})$, which implies that $\gamma(B_{2}^{2}-Z_{v_{2}}^{-})=\gamma(B_{2}^{2})$.  But, it is impossible since $Z_{v_{2}}^{-}$ is a bondage edge set of $B_{2}^{2}$. Thus $\gamma(G-Z_{v_{2}}^{-})>\gamma(G)$. The lemma follows.
\end{proof}

\begin{lemma}\label{upper-2}
If $n\equiv 2$ \emph{(mod 3)}, then $b(K_{m}\boxtimes P_{n})\leq m$.
\end{lemma}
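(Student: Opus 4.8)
The plan is to exhibit a bondage edge set of size $m$, namely the ``horizontal'' matching $Z_{v_1v_2}^{\thinspace\shortmid}$ joining the first two columns. Since $|Z_{v_1v_2}^{\thinspace\shortmid}|=m$, it suffices to prove $\gamma(G-Z_{v_1v_2}^{\thinspace\shortmid})>\gamma(G)$. When $n=2$ this is immediate: here $G=B_1^2\cong K_{2m}$ and $Z_{v_1v_2}^{\thinspace\shortmid}$ is exactly the set shown to be a bondage edge set of $K_m\boxtimes\{v_1v_2\}$ in Lemma \ref{vxy}. So I would assume $n\ge5$ and argue by contradiction: if $\gamma(G-Z_{v_1v_2}^{\thinspace\shortmid})=\gamma(G)$, pick $D\in\underline{MDS}(G-Z_{v_1v_2}^{\thinspace\shortmid})$, which is then also in $\underline{MDS}(G)$.

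The core of the argument is to pin down $|D\cap V(B_1^2)|$ from two sides and reach a contradiction. For the lower bound, I would first invoke Lemma \ref{block-0}(b) with $i=3$ (legitimate since $n\ge5$) to get $|D\cap R_3|=0$; that is, no vertex of $D$ lies in column $3$. Because every vertex of $B_1^2=R_1\cup R_2$ has all its neighbours inside $R_1\cup R_2\cup R_3$, and $D$ avoids $R_3$, every vertex of $B_1^2$ must be dominated, in $G-Z_{v_1v_2}^{\thinspace\shortmid}$, by a vertex of $D\cap V(B_1^2)$. Hence $D\cap V(B_1^2)$ is a dominating set of $B_1^2-Z_{v_1v_2}^{\thinspace\shortmid}$. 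Since $B_1^2\cong K_{2m}$ and $Z_{v_1v_2}^{\thinspace\shortmid}$ is a bondage edge set of it (Lemma \ref{vxy}), we have $\gamma(B_1^2-Z_{v_1v_2}^{\thinspace\shortmid})\ge2$, forcing $|D\cap V(B_1^2)|\ge2$.

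For the matching upper bound I would turn to Lemma \ref{block-d} with $j=3$, giving $|D\cap V(B_3^n)|\ge\gamma(B_4^n)=\gamma(K_m\boxtimes P_{n-3})=\lceil\frac{n-3}{3}\rceil$. Writing $n=3t+2$ with $t\ge1$, this equals $t$, while $|D|=\gamma(G)=\lceil\frac n3\rceil=t+1$ by Theorem \ref{k-p-d}. Consequently $|D\cap V(B_1^2)|=|D|-|D\cap V(B_3^n)|\le(t+1)-t=1$, contradicting $|D\cap V(B_1^2)|\ge2$. This contradiction yields $\gamma(G-Z_{v_1v_2}^{\thinspace\shortmid})>\gamma(G)$ and hence $b(G)\le m$.

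The step I expect to be most delicate is the lower bound, specifically the claim that $D\cap V(B_1^2)$ genuinely dominates $B_1^2-Z_{v_1v_2}^{\thinspace\shortmid}$: this hinges on column $3$ being free of $D$-vertices, which is precisely what makes the first two columns behave as an isolated $K_{2m}$ and lets Lemma \ref{vxy} bite. The other point requiring care is the arithmetic showing $\gamma(B_4^n)=\gamma(B_3^n)$ -- that deleting the single empty column $R_3$ does not lower the domination number of the remaining tail -- which is exactly the feature special to the residue $n\equiv2\pmod3$ and is what caps $|D\cap V(B_1^2)|$ at $1$.
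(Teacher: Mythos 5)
Your proof is correct, and its skeleton coincides with the paper's: the same candidate bondage set $Z_{v_1v_2}^{\thinspace\shortmid}$, the same contradiction setup with $D\in\underline{MDS}(G-Z_{v_1v_2}^{\thinspace\shortmid})\subseteq\underline{MDS}(G)$, the same use of Lemma \ref{block-0}(b) to empty column $3$ so that $D\cap V(B_1^2)$ dominates $B_1^2-Z_{v_1v_2}^{\thinspace\shortmid}$, and the same final clash with Lemma \ref{vxy}. Where you diverge is in how you cap $|D\cap V(B_1^2)|$ at $1$: the paper invokes Lemma \ref{st} (the pendant-vertex lemma, applied to $s_0=v_1$, $t_0=v_2$), which gives $|D\cap V(B_1^2)|=1$ in one line for all $n\geq 2$; you instead count, using Lemma \ref{block-d} with $j=3$ and the arithmetic $\gamma(B_4^n)=\lceil\frac{n-3}{3}\rceil=\lceil\frac{n}{3}\rceil-1$, which is exactly the residue-$2$ feature. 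Your route costs you the separate $n=2$ case (which you handle correctly and which, incidentally, the paper glosses over, since its appeal to $B_3^3$ is vacuous when $n=2$), and it makes visible \emph{why} the bound is tight precisely for $n\equiv 2\pmod 3$; the paper's route via Lemma \ref{st} is shorter and uniform in $n$, at the price of needing that extra structural lemma. Both are sound, and the counting step in your version is the same device the paper itself uses elsewhere (e.g., in Lemma \ref{block-0} and in Claims 2.1--2.2 of the starlike-tree theorem).
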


\begin{proof}
Since $|Z_{v_{1}v_{2}}^{\thinspace\shortmid}|=m$, it needs only to prove that $\gamma(G-Z_{v_{1}v_{2}}^{\thinspace\shortmid})>\gamma(G)$. Suppose to the contrary that $\gamma(G-Z_{v_{1}v_{2}}^{\thinspace\shortmid})=\gamma(G)$. Then, for $D\in \underline{MDS}(G-Z_{v_{1}v_{2}}^{\thinspace\shortmid})$, we have $D\in \underline{MDS}(G)$.

 By Lemma \ref{block-0} (b), we have $|D\cap V(B_{3}^{3})|=0$, which implies that $D\cap V(B_{1}^{2})$ is a dominating set of $B_{1}^{2}-Z_{v_{1}v_{2}}^{\thinspace\shortmid}$. By Lemma \ref{st}, we have $|D\cap V(B_{1}^{2})|=1$. Thus $\gamma(B_{1}^{2}-Z_{v_{1}v_{2}}^{\thinspace\shortmid})\leq|D\cap V(B_{1}^{2})|=1=\gamma(B_{1}^{2})$, and hence $\gamma(B_{1}^{2}-Z_{v_{1}v_{2}}^{\thinspace\shortmid})=\gamma(B_{1}^{2})$, which is contrary to that $Z_{v_{1}v_{2}}^{\thinspace\shortmid}$ is a bondage edge set of  $B_{1}^{2}$.
\end{proof}

\begin{lemma}\label{suspended vertex}
Let $H$ be a graph which contains at least one vertex of degree one. If $m\geq2$, then $b(K_{m}\boxtimes H)\leq \lceil\frac{3m}{2}\rceil$.
\end{lemma}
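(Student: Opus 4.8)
The plan is to exhibit an explicit bondage edge set of size $\lceil\frac{3m}{2}\rceil$ by combining the two bondage sets constructed in Lemma \ref{vxy}. Fix a vertex $s_{0}$ of $H$ of degree one and let $t_{0}$ be its unique neighbour. I would set $F=Z_{s_{0}}^{-}\cup Z_{s_{0}t_{0}}^{\thinspace\shortmid}$. Since $Z_{s_{0}}^{-}$ consists of edges lying inside the copy $K_{m}\boxtimes\{s_{0}\}$ while $Z_{s_{0}t_{0}}^{\thinspace\shortmid}$ consists of the ``vertical'' edges $(u_{i},s_{0})(u_{i},t_{0})$, the two sets are disjoint, so $|F|=\lceil\frac{m}{2}\rceil+m=\lceil\frac{3m}{2}\rceil$. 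It then suffices to prove $\gamma(K_{m}\boxtimes H-F)>\gamma(K_{m}\boxtimes H)$.

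Following the template of Lemmas \ref{upper-3} and \ref{upper-2}, I would argue by contradiction: assume equality holds and pick $D\in\underline{MDS}(K_{m}\boxtimes H-F)$, so that $D$ is simultaneously a minimum dominating set of $K_{m}\boxtimes H$. By Lemma \ref{st} the block $K_{m}\boxtimes\{s_{0}t_{0}\}\cong K_{2m}$ meets $D$ in exactly one vertex $w$. The key structural observation is that, because $s_{0}$ is a leaf of $H$, every vertex of $K_{m}\boxtimes\{s_{0}\}$ has all of its neighbours inside the block $K_{m}\boxtimes\{s_{0}t_{0}\}$; deleting edges cannot create new adjacencies, so in $K_{m}\boxtimes H-F$ the whole set $V(K_{m}\boxtimes\{s_{0}\})$ can only be dominated from within that block, and hence must be dominated by the single vertex $w$.

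The heart of the argument, and the step I expect to require the most care, is showing that after removing $F$ no single vertex $w$ of $K_{2m}=K_{m}\boxtimes\{s_{0}t_{0}\}$ can dominate the $m$-set $V(K_{m}\boxtimes\{s_{0}\})$. I would split into two cases according to which column $w$ lies in. If $w=(u_{i},s_{0})$ lies in the $s_{0}$-column, then since $Z_{s_{0}}^{-}$ covers all the vertices of the complete graph $K_{m}\boxtimes\{s_{0}\}$, the vertex $w$ is incident to some deleted edge $(u_{i},s_{0})(u_{k},s_{0})$ with $k\neq i$, so $(u_{k},s_{0})$ is left undominated. If $w=(u_{i},t_{0})$ lies in the $t_{0}$-column, then the deletion of the vertical edge $(u_{i},t_{0})(u_{i},s_{0})\in Z_{s_{0}t_{0}}^{\thinspace\shortmid}$ leaves $(u_{i},s_{0})$ undominated. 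Either way $w$ fails to dominate $V(K_{m}\boxtimes\{s_{0}\})$, contradicting the previous paragraph.

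This contradiction yields $\gamma(K_{m}\boxtimes H-F)>\gamma(K_{m}\boxtimes H)$, and therefore $b(K_{m}\boxtimes H)\leq|F|=\lceil\frac{3m}{2}\rceil$. The hypothesis $m\geq2$ enters precisely to guarantee that $Z_{s_{0}}^{-}$ is nonempty and genuinely covers the $s_{0}$-column, the edge-cover property of $K_{m}$ being exactly what makes the first case go through; the second case relies only on the $m$ vertical edges and needs no parity considerations.
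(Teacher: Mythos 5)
Your proof is correct and takes essentially the same approach as the paper's: the same bondage set $Z_{s_{0}}^{-}\cup Z_{s_{0}t_{0}}^{\thinspace\shortmid}$, the same contradiction setup via Lemma \ref{st}, and the same two-case analysis according to whether the unique dominating vertex of the block $K_{m}\boxtimes\{s_{0}t_{0}\}$ lies in the $s_{0}$-column or the $t_{0}$-column. Your handling of the first case (any vertex of the $s_{0}$-column is incident to a deleted edge of $Z_{s_{0}}^{-}$, so its partner becomes undominated) is just a slightly more abstract phrasing of the paper's explicit "$(u_{i_{0}-1},s_{0})$ or $(u_{i_{0}+1},s_{0})$" argument, not a different idea.
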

\begin{proof}
Let $s_{0}$ be a vertex of $H$ with degree one, $t_{0}$ be a neighbor of $s_{0}$ in $H$. Since $|Z_{s_{0}}^{-}\cup Z_{s_{0}t_{0}}^{\thinspace\shortmid}|=|Z_{s_{0}}^{-}|+|Z_{s_{0}t_{0}}^{\thinspace\shortmid}|=\lceil\frac{m}{2}\rceil+m=\lceil\frac{3m}{2}\rceil$,
 it suffices to prove that $\gamma(K_{m}\boxtimes H-(Z_{s_{0}}^{-}\cup Z_{s_{0}t_{0}}^{\thinspace\shortmid}))>\gamma(K_{m}\boxtimes H)$. Suppose to the contrary that $\gamma(K_{m}\boxtimes H-(Z_{s_{0}}^{-}\cup Z_{s_{0}t_{0}}^{\thinspace\shortmid}))=\gamma(K_{m}\boxtimes G)$.  Let $S\in \underline{MDS}(K_{m}\boxtimes H-(Z_{s_{0}}^{-}\cup Z_{s_{0}t_{0}}^{\thinspace\shortmid}))$, then we have $S\in \underline{MDS}(K_{m}\boxtimes H)$.

By Lemma \ref{st}, we can let $S\cap V(K_{m}\boxtimes\{s_{0}t_{0}\})=\{(u_{i_{0}},y_{0})\}$, where $1\leq i_{0}\leq m$ and $y_{0}\in\{s_{0},t_{0}\}$. Since $|S\cap V(K_{m}\boxtimes\{s_{0}t_{0}\})|=1$, it follows that all the vertices of $V(K_{m}\boxtimes\{s_{0}\})$ are only dominated by $(u_{i_{0}},y_{0})$ in $K_{m}\boxtimes H-(Z_{s_{0}}^{-}\cup Z_{s_{0}t_{0}}^{\thinspace\shortmid})$. However, if $y_{0}=s_{0}$, then $(u_{i_{0}-1},s_{0})$ or $(u_{i_{0}+1},s_{0})$ can not be dominated by $(u_{i_{0}},y_{0})$ in $K_{m}\boxtimes H-(Z_{s_{0}}^{-}\cup Z_{s_{0}t_{0}}^{\thinspace\shortmid})$, a contradiction; if $y_{0}=t_{0}$, then $(u_{i_{0}},s_{0})$ can not be dominated by $(u_{i_{0}},y_{0})$ in $K_{m}\boxtimes H-(Z_{s_{0}}^{-}\cup Z_{s_{0}t_{0}}^{\thinspace\shortmid})$, also a contradiction. Hence $\gamma(K_{m}\boxtimes H-(Z_{s_{0}}^{-}\cup Z_{s_{0}t_{0}}^{\thinspace\shortmid}))>\gamma(K_{m}\boxtimes H)$. The lemma follows.
\end{proof}

\begin{lemma}\label{upper-1}
If $m, n\geq2$, then $b(K_{m}\boxtimes P_{n})\leq \lceil\frac{3m}{2}\rceil$.
\end{lemma}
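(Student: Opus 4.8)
The plan is to recognize that this lemma is an immediate specialization of Lemma \ref{suspended vertex}, so no new work is really needed. The hypothesis of Lemma \ref{suspended vertex} is that $H$ contains at least one vertex of degree one, and its conclusion is exactly $b(K_{m}\boxtimes H)\leq\lceil\frac{3m}{2}\rceil$ when $m\geq2$. Thus the whole task reduces to checking that a path fits this framework.

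First I would observe that for $n\geq2$ the path $P_{n}=v_{1}v_{2}\cdots v_{n}$ has the endpoints $v_{1}$ and $v_{n}$ as vertices of degree one (when $n=2$ both vertices have degree one; when $n\geq3$ the two endpoints still do). Hence $H=P_{n}$ satisfies the degree-one hypothesis of Lemma \ref{suspended vertex}. Combining this with the assumption $m\geq2$, I would simply invoke Lemma \ref{suspended vertex} with $H=P_{n}$, taking $s_{0}=v_{1}$ (say) and $t_{0}=v_{2}$ as its unique neighbor, to conclude $b(K_{m}\boxtimes P_{n})\leq\lceil\frac{3m}{2}\rceil$.

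There is essentially no obstacle here: the only point requiring verification is that $P_{n}$ possesses a suspended (degree-one) vertex, which is immediate for every $n\geq2$. The genuine content of this bound was already absorbed into Lemma \ref{suspended vertex}, whose proof exploits the structure of $Z_{s_{0}}^{-}\cup Z_{s_{0}t_{0}}^{\thinspace\shortmid}$ around the pendant edge; the present lemma merely records that paths are instances of the general situation treated there. I would therefore keep the proof to a single sentence, citing Lemma \ref{suspended vertex}, and not reprove the domination argument.
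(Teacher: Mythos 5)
Your proposal is correct and matches the paper's own proof exactly: the paper also derives Lemma \ref{upper-1} as an immediate consequence of Lemma \ref{suspended vertex}, since $P_{n}$ with $n\geq2$ has an endpoint of degree one. Your extra sentence verifying the degree-one hypothesis is a fine (if optional) elaboration of the same one-line argument.
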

\begin{proof}
It is immediate from Lemma \ref{suspended vertex}.
\end{proof}

\subsection{Lower bound of the bondage number of $K_{m}\boxtimes P_{n}$}
A \emph{star} is a connected graph with at most one vertex of degree more than one, which is called the \emph{center} of the star. (If there is no vertex of degree
more than one, then any vertex can be the center.) It is easy to see that every star is isomorphic to a complete bipartite $K_{1,n}$ $(n\geq0)$.

\begin{lemma}\label{bondage star}
Let $c$ be the center of a star $K_{1,n}$, $Z\subseteq E(K_{m}\boxtimes K_{1,n})$ and $E^{\ast}(K_{m}\boxtimes K_{1,n})=E(K_{m}\boxtimes K_{1,n})-E(K_{m}\boxtimes K_{1,n}[N_{K_{1,n}}(c)])$. If $|Z\cap E^{\ast}(K_{m}\boxtimes K_{1,n})|<\lceil\frac{m}{2}\rceil$, then $\gamma(K_{m}\boxtimes K_{1,n}-Z)=\gamma(K_{m}\boxtimes K_{1,n})=1$.
\end{lemma}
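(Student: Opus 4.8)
The plan is to exhibit, for any such $Z$, a single vertex that still dominates all of $K_m\boxtimes K_{1,n}-Z$; since $\gamma(K_m\boxtimes K_{1,n})=1$ (apply Proposition \ref{strong-p2} with $H=K_{1,n}$, whose $P_2$ equals $\gamma$ by Proposition \ref{Tp2}, together with $\gamma(K_m)=\gamma(K_{1,n})=1$), this will yield $\gamma(K_m\boxtimes K_{1,n}-Z)=1$ as well. Write $c$ for the center and $\ell_1,\dots,\ell_n$ for the leaves of $K_{1,n}$, and call $K_m\boxtimes\{c\}$ the \emph{center layer} and each $K_m\boxtimes\{\ell_i\}$ a \emph{leaf layer}. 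First I would record the structure of $K_m\boxtimes K_{1,n}$: by the definition of the strong product each layer induces a copy of $K_m$ (Lemma \ref{block-1,2}), every vertex of the center layer is adjacent to every vertex of every leaf layer (because $c\ell_i\in E(K_{1,n})$), and distinct leaf layers are mutually non-adjacent. Consequently every center vertex $(u_a,c)$ is adjacent to all other vertices, i.e.\ is a universal vertex of $K_m\boxtimes K_{1,n}$. Since $N_{K_{1,n}}(c)=\{\ell_1,\dots,\ell_n\}$ induces an edgeless graph, $E^{\ast}(K_m\boxtimes K_{1,n})$ is exactly the set of edges that are \emph{not} internal to a leaf layer, namely all center-layer edges together with all center-to-leaf edges.

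Next I would observe that every edge incident to a center vertex lies in $E^{\ast}(K_m\boxtimes K_{1,n})$: such an edge joins $(u_a,c)$ either to another center vertex or to a leaf vertex, and in neither case is it internal to a leaf layer. Hence for a center vertex $(u_a,c)$ the edges of $Z$ incident to it are precisely the edges of $Z\cap E^{\ast}(K_m\boxtimes K_{1,n})$ incident to it. Because $(u_a,c)$ is universal in $K_m\boxtimes K_{1,n}$, it remains a (size-one) dominating set of $K_m\boxtimes K_{1,n}-Z$ if and only if no edge of $Z\cap E^{\ast}(K_m\boxtimes K_{1,n})$ is incident to it. So it suffices to find one center vertex missed by all edges of $Z^{\ast}:=Z\cap E^{\ast}(K_m\boxtimes K_{1,n})$.

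The final step is a counting argument. Each edge of $Z^{\ast}$ has at most two endpoints in the center layer (a center-layer edge has two, a center-to-leaf edge has one), so the number of center vertices incident to some edge of $Z^{\ast}$ is at most $2|Z^{\ast}|\le 2(\lceil\frac{m}{2}\rceil-1)=2\lceil\frac{m}{2}\rceil-2\le m-1<m$, where I use $2\lceil\frac{m}{2}\rceil\le m+1$. Since there are exactly $m$ center vertices, at least one, say $(u_a,c)$, is incident to no edge of $Z^{\ast}$; then $\{(u_a,c)\}$ dominates $K_m\boxtimes K_{1,n}-Z$, giving $\gamma(K_m\boxtimes K_{1,n}-Z)\le 1$, and equality since the graph is nonempty.

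The argument is short, and the only delicate point is the bookkeeping in the last step: one must keep track of the fact that a center-to-leaf edge contributes only one center endpoint while a center-layer edge contributes two, and that the hypothesis $|Z^{\ast}|<\lceil\frac{m}{2}\rceil$ is exactly what forces $2|Z^{\ast}|$ to stay below $m$ for both parities of $m$. I expect no genuine obstacle beyond justifying that attention may be restricted to center vertices: for $n\ge 2$ a leaf vertex can never dominate by itself (it misses every other leaf layer), so the universal vertices of $K_m\boxtimes K_{1,n}$ are precisely the center vertices, which is why locating a single surviving center vertex is both necessary and sufficient.
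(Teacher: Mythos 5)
Your proposal is correct and follows essentially the same route as the paper: the paper also splits $Z$ into $Z_1 = Z\cap E^{\ast}(K_m\boxtimes K_{1,n})$ and the leaf-layer-internal remainder $Z_2$, uses the counting bound $2|Z_1|\le 2(\lceil\frac{m}{2}\rceil-1)\le m-1<m$ to find a center vertex $(k_0,c)$ untouched by $Z_1$, and then observes that $Z_2$, being internal to leaf layers, cannot destroy domination by $(k_0,c)$. Your explicit remark that every edge incident to a center vertex lies in $E^{\ast}$ is just a restatement of the paper's handling of $Z_2$, so the two arguments coincide.
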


\begin{proof}
Let $Z_{1}=Z\cap E^{\ast}(K_{m}\boxtimes K_{1,n})$, $Z_{2}=Z-Z_{1}$ and $X=V(K_{m}\boxtimes \{c\})=\{(k,c) \ | \ k\in V(K_{m})\}$. Since $|X|=m$ and $|Z_{1}|<\lceil\frac{m}{2}\rceil$, it follows that at least one vertex of $X$, say $(k_{0},c)$, is not covered by $Z_{1}$. Note that $\{(k,c)\}\in \underline{MDS}(K_{m}\boxtimes K_{1,n})$ for every $(k,c)\in X$. So we have $\{(k_{0},c)\}\in\underline{MDS}(K_{m}\boxtimes K_{1,n}-Z_{1})$.

Note that $Z_{2}\subseteq E(K_{m}\boxtimes K_{1,n}[N_{K_{1,n}}(c)])$. After removing $Z_{2}$ from $K_{m}\boxtimes K_{1,n}-Z_{1}$, $\{(k_{0},c)\}$ is still a dominating set of the resulting graph. That is to say, $\{(k_{0},c)\}\in\underline{MDS}(K_{m}\boxtimes K_{1,n}-Z_{1}-Z_{2})=\underline{MDS}(K_{m}\boxtimes K_{1,n}-Z)$. The lemma follows.
\end{proof}

\begin{lemma}\label{p3*-b}
Let $Z\subseteq E(K_{m}\boxtimes P_{n})$ and $E^{\ast}(B_{i}^{i+2})=E(B_{i}^{i+2})-(E(B_{i}^{i})\cup E(B_{i+2}^{i+2}))$. If
$|Z\cap E^{\ast}(B_{i}^{i+2})|<\lceil\frac{m}{2}\rceil$, then $\gamma(B_{i}^{i+2}-Z)=\gamma(B_{i}^{i+2})=1$.
\end{lemma}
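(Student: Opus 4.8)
The plan is to mirror the argument of Lemma~\ref{bondage star}, treating the middle column $R_{i+1}$ exactly as the center column $X$ was treated there. First I would record the structural facts: the three consecutive columns induce $B_{i}^{i+2}\cong K_{m}\boxtimes P_{3}$, so by Theorem~\ref{k-p-d} we have $\gamma(B_{i}^{i+2})=\lceil\frac{3}{3}\rceil=1$, and a single middle-column vertex $(u_{k},v_{i+1})$ already dominates all of $B_{i}^{i+2}$ (it is adjacent to every vertex of $R_{i}$, $R_{i+1}$ and $R_{i+2}$ by the definition of the strong product). The key observation, which I would isolate at the outset, is that \emph{every} edge of $B_{i}^{i+2}$ incident with a middle-column vertex lies in $E^{\ast}(B_{i}^{i+2})$: indeed $E^{\ast}(B_{i}^{i+2})=E(B_{i}^{i+2})-(E(B_{i}^{i})\cup E(B_{i+2}^{i+2}))$ deletes only edges internal to the two end columns, none of which touches $R_{i+1}$.

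Next I would split $Z$ as in Lemma~\ref{bondage star}. Put $Z_{1}=Z\cap E^{\ast}(B_{i}^{i+2})$ and $Z_{2}=(Z\cap E(B_{i}^{i+2}))-Z_{1}$, so that $Z_{2}\subseteq E(B_{i}^{i})\cup E(B_{i+2}^{i+2})$. Let $X=V(B_{i+1}^{i+1})$, which has $|X|=m$ vertices. Since every edge covers at most two vertices, the edges of $Z_{1}$ cover at most $2|Z_{1}|$ of the vertices of $X$; and from $|Z_{1}|<\lceil\frac{m}{2}\rceil$ one checks, separating the parities of $m$, that $2|Z_{1}|\leq 2\lceil\frac{m}{2}\rceil-2<m$. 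Hence some vertex $(u_{k_{0}},v_{i+1})\in X$ is covered by no edge of $Z_{1}$.

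Finally I would conclude that $\{(u_{k_{0}},v_{i+1})\}$ dominates $B_{i}^{i+2}-Z$. By the key observation, all edges incident with $(u_{k_{0}},v_{i+1})$ belong to $E^{\ast}(B_{i}^{i+2})$, so none of them is deleted by $Z_{1}$; and $Z_{2}$, lying entirely inside the two end columns, deletes no edge incident with $(u_{k_{0}},v_{i+1})$. Therefore the closed neighborhood of $(u_{k_{0}},v_{i+1})$ is unchanged in $B_{i}^{i+2}-Z$, and it still dominates every vertex. This gives $\gamma(B_{i}^{i+2}-Z)\leq 1$, and since the graph is nonempty $\gamma(B_{i}^{i+2}-Z)=1=\gamma(B_{i}^{i+2})$.

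I expect the only real subtlety to be the bookkeeping that makes $Z_{2}$ harmless, that is, verifying that the edges deleted in $Z_{2}$ genuinely miss the chosen dominating vertex, together with the parity check in the covering count. The domination-by-a-single-middle-vertex structure does all the heavy lifting, exactly as in the star case of Lemma~\ref{bondage star}.
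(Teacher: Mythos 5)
Your proposal is correct and is essentially the paper's argument: the paper simply observes that $B_{i}^{i+2}\cong K_{m}\boxtimes K_{1,2}$ (with the middle column playing the role of the center) and cites Lemma~\ref{bondage star}, whose proof is exactly the $Z_{1}/Z_{2}$ split and covering count you carried out. You have inlined and specialized that proof to the $P_{3}$ case rather than invoking the lemma, but the decomposition, the uncovered-middle-vertex idea, and the parity count are identical.
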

\begin{proof}
Note that $B_{i}^{i+2}\cong K_{m}\boxtimes K_{1,2}$. By Lemma \ref{bondage star}, the lemma follows.
\end{proof}

\begin{lemma}\label{p2*-b}
Let $Z\subseteq E(K_{m}\boxtimes P_{n})$, and $E^{\ast}(B_{i}^{i+1})=E(B_{i}^{i+1})-E(B_{i}^{i})$ or $E(B_{i}^{i+1})-E(B_{i+1}^{i+1})$. If
$|Z\cap E^{\ast}(B_{i}^{i+1})|<\lceil\frac{m}{2}\rceil$, then $\gamma(B_{i}^{i+1}-Z)=\gamma(B_{i}^{i+1})=1$.
\end{lemma}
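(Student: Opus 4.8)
The plan is to deduce this lemma as an immediate corollary of Lemma \ref{bondage star}, in exactly the same spirit as Lemma \ref{p3*-b} was obtained; the only new ingredient is the freedom to choose the center of a two-vertex star. First I would observe that the edge $v_{i}v_{i+1}$ of $P_{n}$ induces a copy of the star $K_{1,1}$ (equivalently $K_{2}$), so that $B_{i}^{i+1}=K_{m}\boxtimes \{v_{i}v_{i+1}\}\cong K_{m}\boxtimes K_{1,1}$; by Lemma \ref{block-1,2} this is simply $K_{2m}$, and hence $\gamma(B_{i}^{i+1})=1$.

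The key step is to match the two possible definitions of $E^{\ast}(B_{i}^{i+1})$ against the set $E^{\ast}(K_{m}\boxtimes K_{1,1})$ used in Lemma \ref{bondage star}. Since $K_{1,1}$ has no vertex of degree exceeding one, either of its two vertices may serve as the center $c$; the remaining vertex is then the unique leaf, so $N_{K_{1,1}}(c)$ consists of that leaf and $K_{m}\boxtimes K_{1,1}[N_{K_{1,1}}(c)]$ is precisely the copy of $K_{m}$ sitting on the leaf column. Consequently $E^{\ast}(K_{m}\boxtimes K_{1,1})=E(B_{i}^{i+1})-E(\text{leaf column})$. Taking $c=v_{i+1}$ makes $R_{i}$ the leaf column and yields $E^{\ast}(K_{m}\boxtimes K_{1,1})=E(B_{i}^{i+1})-E(B_{i}^{i})$, whereas taking $c=v_{i}$ makes $R_{i+1}$ the leaf column and yields $E^{\ast}(K_{m}\boxtimes K_{1,1})=E(B_{i}^{i+1})-E(B_{i+1}^{i+1})$. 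Thus each of the two forms of $E^{\ast}(B_{i}^{i+1})$ appearing in the statement coincides with $E^{\ast}(K_{m}\boxtimes K_{1,1})$ for a suitable choice of center.

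Finally, with this identification the hypothesis $|Z\cap E^{\ast}(B_{i}^{i+1})|<\lceil\frac{m}{2}\rceil$ is exactly the hypothesis of Lemma \ref{bondage star}, applied to the restriction $Z\cap E(B_{i}^{i+1})$ (edges of $Z$ lying outside $B_{i}^{i+1}$ do not affect $B_{i}^{i+1}-Z$, and since $E^{\ast}(B_{i}^{i+1})\subseteq E(B_{i}^{i+1})$ the intersection with $E^{\ast}$ is unchanged). That lemma then gives $\gamma(B_{i}^{i+1}-Z)=\gamma(B_{i}^{i+1})=1$, as desired. I do not anticipate any genuine difficulty: the entire content is the routine bookkeeping that identifies $E^{\ast}(B_{i}^{i+1})$ with $E^{\ast}(K_{m}\boxtimes K_{1,1})$ in the two cases, after which Lemma \ref{bondage star} does all the work.
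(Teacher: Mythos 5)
Your proposal is correct and follows exactly the paper's own route: the paper likewise observes $B_{i}^{i+1}\cong K_{m}\boxtimes K_{1,1}$ and invokes Lemma \ref{bondage star}, leaving implicit the bookkeeping you spell out (choosing either vertex of $K_{1,1}$ as center to realize both versions of $E^{\ast}(B_{i}^{i+1})$, and restricting $Z$ to $E(B_{i}^{i+1})$). Your added detail is accurate and merely makes the paper's two-line proof explicit.
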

\begin{proof}
Note that $B_{i}^{i+1}\cong K_{m}\boxtimes K_{1,1}$. By Lemma \ref{bondage star}, the lemma follows.
\end{proof}

\begin{lemma}\label{p2-b}
$b(B_{i}^{i+1})= m$ for every $1\leq i< n$.
\end{lemma}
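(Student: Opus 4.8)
The plan is to reduce this statement to the already-known bondage number of a complete graph, so that essentially no new combinatorial work is required. The key structural observation is that $B_{i}^{i+1}$ is the strong product of $K_{m}$ with the single edge $v_{i}v_{i+1}\in E(P_{n})$. Concretely, $B_{i}^{i+1}=G[R_{i}\cup R_{i+1}]=K_{m}\boxtimes P_{n}[\{v_{i},v_{i+1}\}]=K_{m}\boxtimes\{v_{i}v_{i+1}\}$, and Lemma \ref{block-1,2} tells us precisely that $K_{m}\boxtimes\{v_{i}v_{i+1}\}\cong K_{2m}$. Thus the first step is just to record this isomorphism, valid for every $1\leq i<n$ since $v_{i}v_{i+1}$ is an edge of the path.

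Next, since the bondage number is an isomorphism invariant, I would compute $b(B_{i}^{i+1})=b(K_{2m})$ by applying Proposition \ref{k-b}. That proposition gives $b(K_{t})=\lceil\frac{t}{2}\rceil$ for $t\geq2$; taking $t=2m$ yields $b(K_{2m})=\lceil\frac{2m}{2}\rceil=m$. Combining the two steps gives $b(B_{i}^{i+1})=m$, as claimed, and this holds uniformly in $i$ because the argument never depended on which consecutive pair of layers we chose.

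The only point that needs a moment's care is the range of $m$. Proposition \ref{k-b} is stated for complete graphs of order at least $2$, and here the order is $2m$, so the hypothesis $2m\geq2$ is satisfied for all $m\geq1$; hence no genuine restriction is introduced. If one wishes to be fully explicit about the smallest case $m=1$, one checks directly that $B_{i}^{i+1}\cong K_{2}$ and that deleting its single edge raises the domination number from $1$ to $2$, so $b(K_{2})=1=m$, consistent with the formula. There is no substantive obstacle in this lemma: the entire content is the recognition, supplied by Lemma \ref{block-1,2}, that two consecutive layers of $K_{m}\boxtimes P_{n}$ together induce a clique on $2m$ vertices, after which the value follows mechanically from the known bondage number of complete graphs.
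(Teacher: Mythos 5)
Your proof is correct and follows exactly the same route as the paper: identify $B_{i}^{i+1}\cong K_{2m}$ via Lemma \ref{block-1,2} and then apply Proposition \ref{k-b} with order $2m$. The extra check that $2m\geq 2$ holds for all $m\geq1$ is a sensible touch the paper leaves implicit, but nothing of substance differs.
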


\begin{proof}
Since $B_{i}^{i+1}\cong K_{2m}$, the lemma follows from Proposition \ref{k-b}.
\end{proof}

\begin{lemma}\label{lower-3}
If $n\equiv 0$ \emph{(mod 3)}, then $b(K_{m}\boxtimes P_{n})\geq \lceil\frac{m}{2}\rceil$.
\end{lemma}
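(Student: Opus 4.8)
The plan is to prove the bound in its contrapositive form: I will show that every edge set $Z\subseteq E(G)$ with $|Z|<\lceil\frac{m}{2}\rceil$ fails to be a bondage edge set, i.e.\ that $\gamma(G-Z)=\gamma(G)$. Since any dominating set of the spanning subgraph $G-Z$ is also a dominating set of $G$, we always have $\gamma(G-Z)\geq\gamma(G)$; hence it suffices to exhibit, for each such $Z$, a dominating set of $G-Z$ of size $\gamma(G)$. By Theorem \ref{k-p-d}, writing $n=3k$ we have $\gamma(G)=\lceil\frac{n}{3}\rceil=k$, so the target size is $k$.

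The construction is to partition the layers into the $k$ consecutive triples $B_{1}^{3},B_{4}^{6},\ldots,B_{3k-2}^{3k}$, which is exactly where the hypothesis $n\equiv 0$ (mod $3$) is used: it lets $P_{n}$ split cleanly into $k$ copies of $P_{3}$ with no layer shared between blocks. Each block $B_{3j-2}^{3j}$ is a copy of $K_{m}\boxtimes P_{3}$ with domination number $1$. The key observation is that for every $j$,
\[
|Z\cap E^{\ast}(B_{3j-2}^{3j})|\leq |Z|<\Big\lceil\tfrac{m}{2}\Big\rceil,
\]
simply because $Z\cap E^{\ast}(B_{3j-2}^{3j})\subseteq Z$ (the pairwise disjointness of the sets $E^{\ast}(B_{3j-2}^{3j})$ is worth recording but is not actually needed here). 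Therefore Lemma \ref{p3*-b} applies to each block and gives $\gamma(B_{3j-2}^{3j}-Z)=1$; let $w_{j}$ be a vertex dominating $B_{3j-2}^{3j}$ in $B_{3j-2}^{3j}-Z$.

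I would then argue that $D=\{w_{1},w_{2},\ldots,w_{k}\}$ dominates $G-Z$. The vertex sets of the blocks partition $V(G)=R_{1}\cup\cdots\cup R_{n}$, so every vertex of $G$ lies in exactly one block $B_{3j-2}^{3j}$ and is dominated by $w_{j}$ through an edge of $B_{3j-2}^{3j}-Z$; that edge also lies in $G-Z$, so $w_{j}$ still dominates the vertex there. As the $w_{j}$ belong to pairwise disjoint blocks they are distinct, giving $|D|=k=\gamma(G)$ and hence $\gamma(G-Z)\leq\gamma(G)$. Combined with $\gamma(G-Z)\geq\gamma(G)$ this yields $\gamma(G-Z)=\gamma(G)$, so no $Z$ with $|Z|<\lceil\frac{m}{2}\rceil$ is a bondage edge set, and therefore $b(K_{m}\boxtimes P_{n})\geq\lceil\frac{m}{2}\rceil$.

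The only step requiring genuine care is the passage from block-local domination to global domination: verifying that a vertex dominating a block inside $B_{3j-2}^{3j}-Z$ keeps dominating that block's vertices in the larger graph $G-Z$. This is immediate once one notes that $E(B_{3j-2}^{3j})-Z\subseteq E(G)-Z$ and that the blocks cover all layers with no layer split between two blocks; the latter is precisely what $n\equiv0$ (mod $3$) guarantees. I expect this bookkeeping, together with the clean application of Lemma \ref{p3*-b} to each block, to be the substance of the argument.
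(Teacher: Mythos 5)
Your proposal is correct and follows essentially the same route as the paper's own proof: partition $G$ into the blocks $B_{1}^{3},B_{4}^{6},\ldots,B_{n-2}^{n}$, apply Lemma \ref{p3*-b} to each block to obtain a one-vertex dominating set per block, and take the union to get a dominating set of $G-Z$ of size $\frac{n}{3}=\gamma(G)$. Your extra care in checking the hypothesis of Lemma \ref{p3*-b} and the passage from block-local to global domination is sound bookkeeping that the paper leaves implicit.
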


\begin{proof}
Let $Z\subseteq E(G)$ with $|Z|<\lceil\frac{m}{2}\rceil$. We need only to prove that $\gamma(G-Z)=\gamma(G)$. Let $D_{1},D_{2},\ldots,D_{\frac{n}{3}}$ be minimum dominating sets of subgraphs $B_{1}^{3}-Z,B_{4}^{6}-Z,\ldots,B_{n-2}^{n}-Z$, respectively.
By Lemma \ref{p3*-b}, we have $|D_{1}|=|D_{2}|=\cdots=|D_{\frac{n}{3}}|=1$. Thus, $\gamma(G-Z)\leq|\bigcup\limits_{i=1}^{n/3}D_{i}|=\sum\limits_{i=1}^{n/3}|D_{i}|=\frac{n}{3}=\gamma(G)$, which implies that $\gamma(G-Z)=\gamma(G)$.
\end{proof}

\begin{lemma}\label{lower-2}
If $n\equiv 2$ \emph{(mod 3)}, then $b(K_{m}\boxtimes P_{n})\geq m$.
\end{lemma}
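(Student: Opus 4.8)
The goal is to show that if $n\equiv 2$ (mod 3), then any edge set $Z\subseteq E(G)$ with $|Z|<m$ fails to be a bondage edge set, i.e.\ $\gamma(G-Z)=\gamma(G)=\lceil\frac{n}{3}\rceil$. Write $n=3k+2$. My plan is to partition the path into $k$ consecutive triples $R_1R_2R_3,\,R_4R_5R_6,\ldots,R_{3k-2}R_{3k-1}R_{3k}$ together with the final pair $R_{3k+1}R_{3k+2}$, so that $G$ decomposes into the blocks $B_1^3,B_4^6,\ldots,B_{3k-2}^{3k},B_{3k+1}^{3k+2}$. Each of the $k$ triple-blocks is isomorphic to $K_m\boxtimes K_{1,2}$ and the final pair-block is isomorphic to $K_{2m}$. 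Since $\gamma(G)=k+1$, it suffices to exhibit a dominating set of $G-Z$ of size $k+1$, namely one dominator inside each block that survives the deletion of $Z$.

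The key observation is how the budget $|Z|<m$ distributes across these blocks. For the first $k$ triple-blocks, Lemma \ref{p3*-b} guarantees that $\gamma(B_i^{i+2}-Z)=1$ provided $|Z\cap E^{\ast}(B_i^{i+2})|<\lceil\frac{m}{2}\rceil$. For the final pair-block $B_{3k+1}^{3k+2}\cong K_{2m}$, Lemma \ref{p2-b} tells us that its bondage number is exactly $m$, so $\gamma(B_{3k+1}^{3k+2}-Z)=1$ as long as $|Z\cap E(B_{3k+1}^{3k+2})|<m$. The natural argument is to show that these local budget conditions cannot all fail simultaneously when the total budget is only $m-1$. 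The edge sets $E^{\ast}(B_1^3),E^{\ast}(B_4^6),\ldots,E^{\ast}(B_{3k-2}^{3k})$ and $E(B_{3k+1}^{3k+2})$ are pairwise disjoint, so $Z$ splits among them, and I would argue by a counting/contradiction that the restriction of $Z$ to the final block stays below $m$ while each triple-block restriction stays below $\lceil\frac{m}{2}\rceil$.

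The main obstacle is that a single adversarial $Z$ of size $m-1$ could in principle exceed the $\lceil\frac{m}{2}\rceil$ threshold in one triple-block (since $m-1$ can be larger than $\lceil\frac{m}{2}\rceil-1$), so a crude ``disjointness plus pigeonhole'' does not immediately close every case. The decomposition must therefore be chosen adaptively: I expect to argue that if $Z$ concentrates heavily in the final pair-block, forcing $|Z\cap E(B_{3k+1}^{3k+2})|\ge m$ is impossible since $|Z|<m$, so Lemma \ref{p2-b} always yields a surviving dominator there; and if instead $Z$ overloads some triple-block $B_i^{i+2}$ beyond the $\lceil\frac{m}{2}\rceil$ threshold, then comparatively few edges remain for the other blocks, and I can re-group the path (shifting which triples are paired with which leftover column, using the flexibility that $B_j^{j+1}\cong K_{2m}$ for any $j$ via Lemma \ref{p2-b}, and Lemma \ref{p2*-b} for the star-pair bound) to find an alternative partition whose every block has a local $Z$-budget below its own threshold.

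Once such a partition is secured, each block independently admits a single surviving dominating vertex, their union has cardinality $k+1=\gamma(G)$, and it dominates all of $G-Z$ because the blocks cover $V(G)$; hence $\gamma(G-Z)=\gamma(G)$, proving $Z$ is not a bondage set and therefore $b(K_m\boxtimes P_n)\ge m$. The delicate bookkeeping of the adaptive re-grouping, ensuring in every distribution of the $m-1$ deleted edges that some valid partition exists, is the step I expect to require the most care.
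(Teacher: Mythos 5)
Your proposal is correct and takes essentially the same approach as the paper: the paper's proof splits into exactly your two cases, and in the overloaded case it performs precisely the adaptive re-grouping you describe, replacing the heavy triple $B_{i_0}^{i_0+2}$ by the pair block $B_{i_0}^{i_0+1}\cong K_{2m}$ (which tolerates the full budget $|Z|<m$ by Lemma \ref{p2-b}) and shifting the subsequent triples, so that every remaining block sees at most $|Z|-\lceil\frac{m}{2}\rceil<m-\lceil\frac{m}{2}\rceil\leq\lceil\frac{m}{2}\rceil$ deleted edges and Lemma \ref{p3*-b} applies. The bookkeeping you flagged as delicate closes exactly this way; note only that Lemma \ref{p2*-b} is not actually needed here.
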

\begin{proof}
Let $Z\subseteq E(G)$ with $|Z|<m$. It suffices to prove that $\gamma(G-Z)=\gamma(G)$.

If $|E(B_{i}^{i+2})\cap Z|< \lceil\frac{m}{2}\rceil$ for every $i\in\{1,4,\ldots,n-4\}$, let $D_{1},D_{2},\ldots,D_{\frac{n-2}{3}}$ and $D_{\lceil\frac{n}{3}\rceil}$ be the minimum dominating sets of $B_{1}^{3}-Z,B_{4}^{6}-Z,\ldots,B_{n-4}^{n-2}-Z$ and $B_{n-1}^{n}-Z$, respectively. By Lemmas \ref{p3*-b} and \ref{p2-b}, we have $|D_{1}|=|D_{2}|=\cdots=|D_{\frac{n-2}{3}}|=|D_{\lceil\frac{n}{3}\rceil}|=1$. Thus $\gamma(G-Z)\leq|\bigcup\limits_{i=1}^{\lceil n/3\rceil}D_{i}|=\lceil\frac{n}{3}\rceil=\gamma(G)$, which implies that $\gamma(G-Z)=\gamma(G)$.

If $|E(B_{i_{0}}^{i_{0}+2})\cap Z|\geq \lceil\frac{m}{2}\rceil$ for some $i_{0}\in\{1,4,\ldots,n-4\}$, let $J_{1},J_{2},\ldots,J_{\lceil\frac{n}{3}\rceil}$ be the minimum dominating sets of $B_{1}^{3}-Z,B_{4}^{6}-Z,\ldots,B_{i_{0}-3}^{i_{0}-1}-Z,B_{i_{0}}^{i_{0}+1}-Z,B_{i_{0}+2}^{i_{0}+4}-Z,B_{i_{0}+5}^{i_{0}+7}-Z,\ldots
,B_{n-2}^{n}-Z$, respectively.  We must have that $|E(B_{1}^{3})\cap Z|,|E(B_{4}^{6})\cap Z|,\ldots,|E(B_{i_{0}-3}^{i_{0}-1})\cap Z|,|E^{\ast}(B_{i_{0}+2}^{i_{0}+4})\cap Z|,|E(B_{i_{0}+5}^{i_{0}+7})\cap Z|,\ldots
,|E(B_{n-2}^{n})\cap Z|< m-\lceil\frac{m}{2}\rceil \leq \lceil\frac{m}{2}\rceil$. By
Lemmas \ref{p3*-b} and \ref{p2-b}, we have $|J_{1}|=|J_{2}|=\cdots=|J_{\lceil\frac{n}{3}\rceil}|=1$. Thus, we can get that $\gamma(G-Z)=|\bigcup\limits_{i=1}^{\lceil n/3\rceil}J_{i}|=\gamma(G)$.
\end{proof}

\begin{lemma}\label{lower-1-lemma}
Let $Z\subseteq E(K_{m}\boxtimes P_{n})$ with $|Z|<\lceil\frac{3m}{2}\rceil$. If $m,n\geq 2$, then there is a vertex of $V(B_{1}^{2})$ which can dominate $V(B_{1}^{1})$ in $K_{m}\boxtimes P_{n}-Z$.
\end{lemma}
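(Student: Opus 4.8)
The plan is to argue by contradiction, exploiting the fact that $B_{1}^{2}\cong K_{2m}$ and $B_{1}^{1}\cong K_{m}$ (Lemma \ref{block-1,2}). Because $v_{1}$ is an endpoint of $P_{n}$, in $G$ itself every vertex of $V(B_{1}^{2})=R_{1}\cup R_{2}$ is adjacent to, or equal to, every vertex of $V(B_{1}^{1})=R_{1}$; hence each such vertex dominates $V(B_{1}^{1})$ in $G$. I would suppose that \emph{no} vertex of $V(B_{1}^{2})$ dominates $V(B_{1}^{1})$ in $G-Z$ and show this forces $|Z|\geq\lceil\frac{3m}{2}\rceil$, contradicting the hypothesis $|Z|<\lceil\frac{3m}{2}\rceil$. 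The organising observation is that only two disjoint kinds of edges of $Z$ can destroy the domination of $V(B_{1}^{1})$ by a vertex of $V(B_{1}^{2})$: the edges $Z_{1}=Z\cap E(B_{1}^{1})$ lying inside $R_{1}$, which are the only ones relevant to vertices of $R_{1}$, and the edges $Z_{2}$ of $Z$ joining $R_{1}$ to $R_{2}$, which are the only ones relevant to vertices of $R_{2}$.

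Next I would extract two independent lower bounds. A vertex $(u_{i},v_{1})\in R_{1}$ fails to dominate $V(B_{1}^{1})$ in $G-Z$ precisely when some edge of $Z_{1}$ is incident with it (an edge from $(u_{i},v_{1})$ to $R_{2}$ or to a farther layer cannot stop it from dominating $R_{1}$). So if every vertex of $R_{1}$ fails, then $Z_{1}$ must cover all $m$ vertices of $B_{1}^{1}\cong K_{m}$; since the minimum number of edges of $K_{m}$ covering all its vertices equals $\lceil\frac{m}{2}\rceil$ (this is exactly the quantity underlying Proposition \ref{k-b}), we get $|Z_{1}|\geq\lceil\frac{m}{2}\rceil$. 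Likewise, a vertex $(u_{i},v_{2})\in R_{2}$ fails to dominate $V(B_{1}^{1})$ precisely when some edge of $Z_{2}$ is incident with it; each edge of $Z_{2}$ has exactly one endpoint in $R_{2}$, so covering all $m$ vertices of $R_{2}$ requires $|Z_{2}|\geq m$.

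Finally, combining the disjoint contributions yields $|Z|\geq|Z_{1}|+|Z_{2}|\geq\lceil\frac{m}{2}\rceil+m=\lceil\frac{3m}{2}\rceil$, the desired contradiction, so some vertex of $V(B_{1}^{2})$ must dominate $V(B_{1}^{1})$ in $G-Z$. The step I expect to demand the most care is the clean separation of the two failure modes: verifying rigorously that for a vertex of $R_{1}$ only the edges of $Z$ internal to $B_{1}^{1}$ matter, while for a vertex of $R_{2}$ only the $R_{1}$--$R_{2}$ crossing edges matter, so that $Z_{1}$ and $Z_{2}$ are genuinely disjoint and their bounds add. The other point to confirm is the identity $\lceil\frac{m}{2}\rceil+m=\lceil\frac{3m}{2}\rceil$ (checking the even and odd cases of $m$ separately), which is what makes the two bounds sum exactly to the threshold and renders the estimate tight.
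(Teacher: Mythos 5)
Your proof is correct and is essentially the paper's argument in contrapositive form: the paper splits into the cases $|Z\cap E(B_{1}^{1})|<\lceil\frac{m}{2}\rceil$ (some vertex of $R_{1}$ survives, via Proposition \ref{k-b}) and $|Z\cap E(B_{1}^{1})|\geq\lceil\frac{m}{2}\rceil$ (then $|Z\cap[R_{1},R_{2}]|<m$, so some vertex of $R_{2}$ survives), which is exactly your decomposition of $Z$ into $Z_{1}$ and $Z_{2}$ with the same two bounds and the same identity $\lceil\frac{m}{2}\rceil+m=\lceil\frac{3m}{2}\rceil$. The separation of failure modes you flagged as delicate does hold for precisely the reasons you give, so no gap remains.
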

\begin{proof}
If $|Z\cap E(B_{1}^{1})|<\lceil\frac{m}{2}\rceil$, then by Proposition \ref{k-b} and Lemma \ref{block-1,2}, we have $\gamma(B_{1}^{1}-Z)=\gamma(B_{1}^{1})=1$, and so the lemma is true.
If $|Z\cap E(B_{1}^{1})|\geq\lceil\frac{m}{2}\rceil$, then we have $|Z\cap [V(B_{1}^{1}),V(B_{2}^{2})]|<\lceil\frac{3m}{2}\rceil-\lceil\frac{m}{2}\rceil=m$. Thus, there is at least one vertex of $V(B_{2}^{2})$, say $(u_{r_{0}},v_{l_{0}})$, which can not be covered by $Z\cap [V(B_{1}^{1}),V(B_{2}^{2})]$. Now, it easy to see that $(u_{r_{0}},v_{l_{0}})$ is a vertex of $V(B_{1}^{2})$ which can dominate $V(B_{1}^{1})$ in $G-Z$.
\end{proof}

\begin{lemma}\label{lower-1}
If $m,n\geq 2$ and $n\equiv 1$ \emph{(mod 3)}, then $b(K_{m}\boxtimes P_{n})\geq \lceil\frac{3m}{2}\rceil$.
\end{lemma}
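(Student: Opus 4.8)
The plan is to prove the matching lower bound to Lemma \ref{upper-1}: I would show that deleting fewer than $\lceil\frac{3m}{2}\rceil$ edges never raises the domination number, i.e. $\gamma(G-Z)=\gamma(G)$ for every $Z\subseteq E(G)$ with $|Z|<\lceil\frac{3m}{2}\rceil$. Writing $n=3k+1$ (so $\gamma(G)=\lceil\frac n3\rceil=k+1$ by Theorem \ref{k-p-d}), it suffices to build a dominating set of $G-Z$ of size $k+1$ by splitting the layers $R_1,\dots,R_n$ into $k+1$ consecutive blocks, each dominable by one vertex of $G-Z$. The arithmetic backbone is that, since $n=3k+1$, such a partition into blocks of sizes in $\{1,2,3\}$ must take exactly one of two shapes: one singleton plus $k$ triples, or two pairs plus $k-1$ triples.

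The admissible blocks are governed by the preceding lemmas: a triple $B_i^{i+2}$ is dominable by one vertex once $|Z\cap E^{\ast}(B_i^{i+2})|<\lceil\frac m2\rceil$ (Lemma \ref{p3*-b}); a pair $B_i^{i+1}\cong K_{2m}$ once $|Z\cap E(B_i^{i+1})|<m$ (Lemma \ref{p2-b}), and a boundary pair already once its one-sided $E^{\ast}$ carries fewer than $\lceil\frac m2\rceil$ edges (Lemma \ref{p2*-b}); finally $R_1$ is dominable by some vertex of $B_1^2$ with no numerical hypothesis (Lemma \ref{lower-1-lemma}). The guiding principle is that an edge of $Z$ lying strictly between two blocks, and the internal edges at the outer ends of a triple, do not count against any block; so I would try to route the block boundaries through the places where $Z$ piles up. When $Z$ is thinly spread every block in the default singleton-plus-triples shape meets its threshold (the singleton $R_1$ being covered by Lemma \ref{lower-1-lemma}), and we are done. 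When the edges of $Z$ concentrate in one spot, I would slide the singleton (Shape A allows it at any layer $\equiv1\pmod 3$) or convert the offending triple into a pair handled by Lemma \ref{p2-b}; the residue $n\equiv1\pmod 3$ makes the re-aligned blocks again total $k+1$, exactly as in the re-partition of Lemma \ref{lower-2}.

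The main obstacle is that the budget $\lceil\frac{3m}{2}\rceil-1$ is large enough to create two separated heavy spots of $Z$, a situation impossible under the smaller budgets of Lemmas \ref{lower-3} and \ref{lower-2}. The disjointness of the $E^{\ast}$-sets used by a fixed partition caps the number of bad blocks at two, but two heavy spots in incompatible positions cannot be avoided by a single singleton; there I would pass to the two-pairs shape and place each pair so that its block boundary falls inside a heavy spot, leaving the $k-1$ triples in the light region. The delicate point is verifying the two replacement pairs are themselves single-vertex-dominable: two disjoint heavy spots absorb at least $2\lceil\frac m2\rceil$ edges, so each carries fewer than $m$ and the complement carries fewer than $\lceil\frac m2\rceil$; I would then decide the orientation of each pair (using the one-sided Lemma \ref{p2*-b} when the heavy edges sit in a single layer, or the $K_{2m}$ bound of Lemma \ref{p2-b} otherwise) so that the weight of the spot lands on the between-block side of the chosen boundary. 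Carrying out this accounting in the finitely many positional sub-cases yields a dominating set of size $k+1$ for every admissible $Z$, whence $\gamma(G-Z)=\gamma(G)$ and $b(K_m\boxtimes P_n)\ge\lceil\frac{3m}{2}\rceil$.
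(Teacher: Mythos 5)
Your plan is essentially the paper's own proof: the paper likewise splits the $n$ layers into $\lceil\frac{n}{3}\rceil$ consecutive blocks taking exactly your two shapes --- a boundary singleton plus triples (its Subcase 1.1, via Lemmas \ref{lower-1-lemma} and \ref{p3*-b}) or two pairs plus triples (its Subcases 1.2, 2.1, 2.2, via Lemmas \ref{p2-b}, \ref{p2*-b} and \ref{p3*-b}) --- and re-partitions so that the at most two heavy concentrations of $Z$ fall into uncounted edge sets, with precisely your budget arithmetic $\lceil\frac{3m}{2}\rceil-\lceil\frac{m}{2}\rceil-\lceil\frac{m}{2}\rceil\leq\lceil\frac{m}{2}\rceil$. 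One caveat: Lemma \ref{lower-1-lemma} certifies a singleton block only at an end of the path, so your parenthetical option of sliding the singleton to an \emph{internal} layer $\equiv 1 \pmod 3$ is not supported by any cited lemma; fortunately it is never needed, since every internal heavy spot can be (and in the paper is) handled by the two-pair shape instead.
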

\begin{proof}
Let $Z\subseteq E(G)$ with $|Z|<\lceil\frac{3m}{2}\rceil$. It needs only to prove that $\gamma(G-Z)=\gamma(G)$.

\smallskip
{\it Case 1.} $|E(B_{i_{0}}^{i_{0}+1})\cap Z|\geq m$ for some $1\leq i_{0}< n$.

{\it Subcase 1.1.} $B_{i_{0}}^{i_{0}+1}=B_{1}^{2}$ or $B_{i_{0}}^{i_{0}+1}=B_{n-1}^{n}$.

If $B_{i_{0}}^{i_{0}+1}=B_{1}^{2}$, by Lemma \ref{lower-1-lemma}, there is a vertex of $V(B_{1}^{2})$, say $(u_{p_{0}},v_{q_{0}})$, which can dominate $V(B_{1}^{1})$ in $G-Z$.
Let $D_{1},D_{2},\ldots,D_{\frac{n-1}{3}}$ be minimum dominating sets of $B_{2}^{4}-Z,B_{5}^{7}-Z,\ldots,B_{n-2}^{n}-Z$, respectively.
By Lemma \ref{p3*-b}, we have $|D_{1}|=|D_{2}|=\cdots=|D_{\frac{n-1}{3}}|=1$. Thus $\gamma(G-Z)\leq|\bigcup\limits_{l=1}^{ (n-1)/3}D_{l}|+|\{(u_{p_{0}},v_{q_{0}})\}|\leq\lceil\frac{n}{3}\rceil=\gamma(G)$, which implies that $\gamma(G-Z)=\gamma(G)$.
Symmetrically, if $B_{i_{0}}^{i_{0}+1}=B_{n-1}^{n}$,  we can also get that $\gamma(G-Z)=\gamma(G)$.

\smallskip
{\it Subcase 1.2.} $B_{i_{0}}^{i_{0}+1}\neq B_{1}^{2}$ and $B_{i_{0}}^{i_{0}+1}\neq B_{n-1}^{n}$.

If $i_{0}\equiv 1$ (mod 3), let $L_{1},L_{2},\ldots,L_{\lceil\frac{n}{3}\rceil}$ be minimum dominating sets of $B_{1}^{2}-Z,B_{3}^{4}-Z,B_{5}^{7}-Z,B_{8}^{10}-Z,\ldots,B_{n-2}^{n}-Z$, respectively. We must have $|Z\cap E(B_{1}^{2})|$, $|Z\cap (E(B_{3}^{4})-E(B_{4}^{4}))|$, $|Z\cap E^{\ast}(B_{5}^{7})|, |Z\cap E^{\ast}(B_{8}^{10})|,\ldots,|Z\cap E^{\ast}(B_{n-2}^{n})|<\lceil\frac{3m}{2}\rceil-m=\lceil\frac{m}{2}\rceil$. By Lemmas \ref{p2*-b} and  \ref{p3*-b}, we have $|L_{1}|=|L_{2}|=\cdots=|L_{\lceil\frac{n}{3}\rceil}|=1$. And so
we can get that $\gamma(G-Z)=\sum\limits_{l=1}^{\lceil n/3\rceil}|L_{l}|=\lceil\frac{n}{3}\rceil=\gamma(G)$.

If $i_{0}\equiv 2$ (mod 3), we consider the minimum dominating sets of subgraphs $B_{1}^{2}-Z,B_{3}^{5}-Z,B_{6}^{8}-Z,\ldots,B_{n-4}^{n-2}-Z,B_{n-1}^{n}-Z$.
It is similar to the above paragraph, we can get that $\gamma(G-Z)=\gamma(G)$ by calculating the cardinality of the union of the minimum dominating sets of these subgraphs.

If $i_{0}\equiv 0$ (mod 3), we consider the minimum dominating sets of $B_{1}^{3}-Z,B_{4}^{6}-Z,\ldots,B_{n-6}^{n-4}-Z,B_{n-3}^{n-2}-Z,B_{n-1}^{n}-Z$. As a consequence, we have $\gamma(G-Z)=\gamma(G)$.

\smallskip
{\it Case 2.} $|E(B_{i}^{i+1})\cap Z|< m$ for every $1\leq i< n$.

{\it Subcase 2.1.} $|E(B_{j}^{j+2})\cap Z|< \lceil\frac{m}{2}\rceil$ for every $j\in\{1,4,\ldots,n-6\}$.

Let $J_{1},J_{2},\ldots,J_{\lceil\frac{n}{3}\rceil}$ be minimum dominating sets of $B_{1}^{3}-Z,B_{4}^{6}-Z,\ldots,B_{n-6}^{n-4}-Z,B_{n-3}^{n-2}-Z,B_{n-1}^{n}-Z$, respectively. (If $n=4$, we consider the minimum dominating sets of $B_{1}^{2}$ and $B_{3}^{4}$.)
By Lemmas \ref{p3*-b} and \ref{p2-b}, we have $|J_{1}|=|J_{2}|=\cdots=|J_{\lceil\frac{n}{3}\rceil}|=1$. Thus, we can get that $\gamma(G-Z)=\sum\limits_{l=1}^{\lceil n/3\rceil}|J_{l}|=\gamma(G)$.

\smallskip
{\it Subcase 2.2.} $|E(B_{j_{0}}^{j_{0}+2})\cap Z|\geq \lceil\frac{m}{2}\rceil$ for some $j_{0}\in\{1,4,\ldots,n-6\}$.

Assume without loss of generality that
\begin{center}
$j_{0}=$ min\large{$\{$}\normalsize{$j\in\{1,4,\ldots,n-6\} : |E(B_{j}^{j+2})\cap Z|\geq \lceil\frac{m}{2}\rceil$}\large{$\}$}\normalsize{.}
\end{center}

If $|E(B_{k}^{k+2})\cap Z|<\lceil\frac{m}{2}\rceil$ for every $k\in\{j_{0}+2,j_{0}+5,\ldots,n-4\}$, let $Q_{1},Q_{2},\ldots,Q_{\lceil\frac{n}{3}\rceil}$ be the minimum dominating sets of $B_{1}^{3}-Z,B_{4}^{6}-Z,\ldots,B_{j_{0}-3}^{j_{0}-1}-Z,B_{j_{0}}^{j_{0}+1}-Z,B_{j_{0}+2}^{j_{0}+4}-Z,B_{j_{0}+5}^{i_{0}+7}-Z,\ldots,B_{n-4}^{n-2}-Z
,B_{n-1}^{n}-Z$, respectively. By
Lemmas \ref{p3*-b} and \ref{p2-b}, we have $|Q_{1}|=|Q_{2}|=\cdots=|Q_{\lceil\frac{n}{3}\rceil}|=1$. Hence $\gamma(G-Z)=\bigcup\limits_{l=1}^{\lceil n/3\rceil}|Q_{l}|=\gamma(G)$.

If $|E(B_{k_{0}}^{k_{0}+2})\cap Z|\geq \lceil\frac{m}{2}\rceil$ for some $k_{0}\in\{j_{0}+2,j_{0}+5,\ldots,n-4\}$, let $W_{1},W_{2},\ldots,W_{\lceil\frac{n}{3}\rceil}$ be the minimum dominating sets of $B_{1}^{3}-Z,B_{4}^{6}-Z,\ldots,B_{j_{0}-3}^{j_{0}-1}-Z,B_{j_{0}}^{j_{0}+1}-Z,B_{j_{0}+2}^{j_{0}+4}-Z,B_{j_{0}+5}^{j_{0}+7}-Z,\ldots,
B_{k_{0}-3}^{k_{0}-1}-Z,B_{k_{0}}^{k_{0}+1}-Z,B_{k_{0}+2}^{k_{0}+4}-Z,B_{k_{0}+5}^{k_{0}+7}-Z,\ldots,B_{n-2}^{n}-Z
$, respectively. We must have $|Z\cap E(B_{1}^{3})|,|Z\cap E(B_{4}^{6})|,\ldots,|Z\cap E(B_{j_{0}-3}^{j_{0}-1})|,|Z\cap E^{\ast}(B_{j_{0}+2}^{j_{0}+4})|,|Z\cap E(B_{j_{0}+5}^{j_{0}+7})|,\ldots,
|Z\cap E(B_{k_{0}-3}^{k_{0}-1})|,|Z\cap E^{\ast}(B_{k_{0}+2}^{k_{0}+4})|,|Z\cap E(B_{k_{0}+5}^{k_{0}+7})|,\ldots,|Z\cap E(B_{n-2}^{n})|<\lceil\frac{3m}{2}\rceil-\lceil\frac{m}{2}\rceil-\lceil\frac{m}{2}\rceil\leq\lceil\frac{m}{2}\rceil
$. As a consequence, we have $\gamma(G-Z)=\sum\limits_{l=1}^{\lceil n/3\rceil}|W_{l}|=\gamma(G)$.
\end{proof}

\subsection{Exact value the bondage number of $K_{m}\boxtimes P_{n}$}
\begin{thm}\label{main-thm}
If $n\geq2$, then
\begin{equation*}
b(K_{m}\boxtimes P_{n})=
\begin{cases}
\lceil\frac{m}{2}\rceil, & \text{if $n\equiv0$ \emph{(mod 3);}}\\
m, & \text{if $n\equiv2$ \emph{(mod 3);}}\\
\lceil\frac{3m}{2}\rceil, & \text{if $n\equiv1$ \emph{(mod 3).}}
\end{cases}
\end{equation*}
\end{thm}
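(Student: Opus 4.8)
The plan is to prove Theorem \ref{main-thm} simply by combining the matching upper and lower bounds that have already been established case by case according to the residue of $n$ modulo $3$. The theorem is a three-case equality, and for each case the upper bound and the lower bound have been proved in separate lemmas; so the entire argument reduces to pairing them up. Concretely, I would organize the proof by the three residue classes and, in each, cite the relevant upper-bound lemma together with the relevant lower-bound lemma, noting that the two coincide and therefore pin down the exact value.

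For the case $n\equiv 0\pmod 3$, I would invoke Lemma \ref{upper-3}, which gives $b(K_m\boxtimes P_n)\le\lceil\frac{m}{2}\rceil$, together with Lemma \ref{lower-3}, which gives $b(K_m\boxtimes P_n)\ge\lceil\frac{m}{2}\rceil$, yielding equality $b(K_m\boxtimes P_n)=\lceil\frac{m}{2}\rceil$. For the case $n\equiv 2\pmod 3$, I would pair Lemma \ref{upper-2} (the bound $\le m$) with Lemma \ref{lower-2} (the bound $\ge m$) to get $b(K_m\boxtimes P_n)=m$. For the case $n\equiv 1\pmod 3$, I would combine Lemma \ref{upper-1} (the bound $\le\lceil\frac{3m}{2}\rceil$) with Lemma \ref{lower-1} (the bound $\ge\lceil\frac{3m}{2}\rceil$) to conclude $b(K_m\boxtimes P_n)=\lceil\frac{3m}{2}\rceil$.

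The only genuinely delicate point is the small-parameter bookkeeping in $m$. Several of the upper-bound lemmas (for instance Lemma \ref{upper-3} and Lemma \ref{upper-1}, via Lemma \ref{suspended vertex}) are stated under the hypothesis $m\ge 2$, so I would first dispatch the degenerate case $m=1$ directly. When $m=1$ we have $K_1\boxtimes P_n\cong P_n$, so $b(K_1\boxtimes P_n)=b(P_n)$, and Proposition \ref{p-b} gives exactly $1$ when $n\equiv 0$ or $2\pmod 3$ and $2$ when $n\equiv 1\pmod 3$; these match $\lceil\frac{1}{2}\rceil=1$, $1$, and $\lceil\frac{3}{2}\rceil=2$ respectively, so the stated formula holds for $m=1$ as well. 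With that special case verified, the remaining $m\ge 2$ instances follow immediately from the lemma pairings above.

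I do not anticipate any real obstacle here: the conceptual and computational work has all been front-loaded into the preceding lemmas, and what remains is the purely formal step of matching bounds. The one thing I would take care to check is that every residue class is covered and that the lower-bound lemmas' hypotheses (notably the $n\ge 2$ assumption, and the $m,n\ge2$ assumption in Lemma \ref{lower-1}) are all in force, so that no case of $n\ge 2$ is left unaddressed once the $m=1$ reduction is handled separately.
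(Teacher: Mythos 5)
Your proposal is correct and is essentially identical to the paper's own proof: the paper also combines Lemmas \ref{upper-3}, \ref{upper-2}, \ref{upper-1} with Lemmas \ref{lower-3}, \ref{lower-2}, \ref{lower-1} for $m\geq 2$, and handles $m=1$ via the isomorphism $K_{1}\boxtimes P_{n}\cong P_{n}$ and Proposition \ref{p-b}. Your explicit verification that the formula specializes correctly at $m=1$ is a welcome touch of care that the paper leaves implicit.
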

\begin{proof}
If $m\geq2$, then the theorem follows by Lemmas \ref{upper-3}, \ref{upper-2}, \ref{upper-1}, \ref{lower-3}, \ref{lower-2} and \ref{lower-1}; if $m=1$, then $K_{m}\boxtimes P_{n}\cong P_{n}$, and the theorem follows by Proposition \ref{p-b}.
\end{proof}
\section{A consequence on starlike tree}
A \emph{starlike tree} is a tree with at most one vertex of degree more than two, which is called the \emph{center} of the starlike tree. (If there is no vertex of degree
more than two, then any vertex can be the center.) We denote by $S(n_1,n_2,\ldots,n_l)$ a starlike
tree in which removing the center leaves disjoint paths $P_{n_1} ,P_{n_2},\ldots,P_{n_l}$ with orders $n_{1},n_{2},\ldots,n_{l}$ respectively, which are called \emph{branches} of the starlike tree. If let
$S=S(n_1,n_2,\ldots,n_l)$, $c$ be the center of $S$ and $\widetilde{P}_{n_i}=S[\{c\}\cup V(P_{n_i})]$, $i=1,2,\ldots,l$, then the subgraph $\widetilde{P}_{n_i}$ is called a \emph{augmented branch} of $S$.
In this section, we always set $P_{n_i}=x_{1}^{i}x_{2}^{i}\cdots x_{n_{i}}^{i}$ with $x_{1}^{i}$ being a neighbor of $c$ and $x_{n_{i}}^{i}$ being a vertex of degree one in $S$ , $i=1,2,\ldots,l$.

\begin{lemma}\label{sl-d-lemma}
Let $S=S(n_{1},n_{2},\ldots,n_{l})$ be a starlike tree, $c$ be the center of $S$, $D$ be a dominating set of $S$, $P_{n_{1}},P_{n_{2}},\ldots,P_{n_{l}}$ be the branches of $S$ and $\widetilde{P}_{n_{1}},\widetilde{P}_{n_{2}},\ldots,\widetilde{P}_{n_{l}}$ be the augmented branches corresponding to $P_{n_{1}},P_{n_{2}},\ldots,P_{n_{l}}$, respectively. Then, for $1\leq i \leq l$,

\noindent (a) if $n_{i}\equiv 1$ \emph{(mod 3)}, then $|D\cap V(P_{n_i})|\geq \lceil\frac{n_{i}}{3}\rceil-1$ and $|D\cap V(\widetilde{P}_{n_i})|\geq \lceil\frac{n_{i}}{3}\rceil;$

\noindent (b) if $n_{i}\equiv 2$ \emph{(mod 3)}, then $|D\cap V(P_{n_i})|\geq \lceil\frac{n_{i}}{3}\rceil;$

\noindent (c) if $n_{i}\equiv 0$ \emph{(mod 3)}, then $|D\cap (V(P_{n_i}-x_{1}^{i})|\geq \lceil\frac{n_{i}}{3}\rceil$.
\end{lemma}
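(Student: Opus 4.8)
The plan is to reduce all four inequalities to a single elementary counting fact about paths, together with a careful accounting of the two boundary objects $x_1^i$ and $c$. The key structural observation is that the only vertex of a branch $P_{n_i}=x_1^ix_2^i\cdots x_{n_i}^i$ that is adjacent in $S$ to anything outside $V(P_{n_i})$ is $x_1^i$ (which is adjacent to $c$). Consequently, each of the $n_i-1$ vertices $x_2^i,\ldots,x_{n_i}^i$ has all of its neighbours inside $V(P_{n_i})$, and so must be dominated by a vertex of $A:=D\cap V(P_{n_i})$. Since in a path every vertex dominates at most three vertices (its closed neighbourhood has size at most $3$), a straightforward incidence count gives $3|A|\ge n_i-1$, that is, $|A|\ge\lceil (n_i-1)/3\rceil$. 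This base estimate is the engine of the whole argument.

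From this base estimate, parts (a, first inequality) and (b) follow by arithmetic modulo $3$. If $n_i\equiv1$, then $\lceil (n_i-1)/3\rceil=\lceil n_i/3\rceil-1$, which is exactly the claimed bound $|D\cap V(P_{n_i})|\ge\lceil n_i/3\rceil-1$; if $n_i\equiv2$, then $\lceil (n_i-1)/3\rceil=\lceil n_i/3\rceil$, giving $|D\cap V(P_{n_i})|\ge\lceil n_i/3\rceil$. No further work is needed for these two parts.

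For part (c), where $n_i\equiv0$, I would sharpen the count by isolating the boundary vertex. The point is that $x_1^i$, even when it lies in $A$, dominates only one of the vertices $x_2^i,\ldots,x_{n_i}^i$ (namely $x_2^i$), whereas every other vertex of $A$ dominates at most three of them. If $x_1^i\notin A$, the base estimate already yields $|A\setminus\{x_1^i\}|=|A|\ge\lceil(n_i-1)/3\rceil=n_i/3$. If $x_1^i\in A$, the refined incidence count reads $1+3(|A|-1)\ge n_i-1$, forcing $|A|\ge n_i/3+1$ and hence $|A\setminus\{x_1^i\}|\ge n_i/3=\lceil n_i/3\rceil$, as required.

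Finally, for the second inequality of part (a) I would split on whether $c\in D$. If $c\notin D$, then $x_1^i$ (whose only neighbours are $c$ and $x_2^i$) must be dominated from within $V(P_{n_i})$, so $A$ now dominates the \emph{entire} path $P_{n_i}$ on all $n_i$ vertices; the counting fact then gives $|D\cap V(\widetilde{P}_{n_i})|=|A|\ge\lceil n_i/3\rceil$. If $c\in D$, then $c\notin V(P_{n_i})$ contributes an extra element, so by the first inequality of (a) we get $|D\cap V(\widetilde{P}_{n_i})|=|A|+1\ge(\lceil n_i/3\rceil-1)+1=\lceil n_i/3\rceil$. The only genuine bookkeeping subtlety — and the step I would be most careful about — is the accounting for the two boundary objects $x_1^i$ and $c$: whether they belong to $D$ determines which vertices of the branch are forced to be dominated internally, and the estimates are tight precisely because $x_1^i$ covers only a single vertex of the branch.
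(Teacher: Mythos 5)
Your proof is correct, and it reaches the lemma by a genuinely different mechanism than the paper. The paper proves each part by a case split on whether a single boundary vertex lies in $D$ ($c$ for part (a), $x_1^i$ for part (b), $x_2^i$ for part (c)); in each case it observes that the trace of $D$ on a suitable vertex set is a dominating set of an explicit induced subpath ($P_{n_i}$, $\widetilde{P}_{n_i}\cong P_{n_i+1}$, $P_{n_i}-x_1^i\cong P_{n_i-1}$, or $P_{n_i}-x_1^i-x_2^i\cong P_{n_i-2}$), and then cites the known value $\gamma(P_k)=\lceil\frac{k}{3}\rceil$ (Proposition \ref{p-d}). You instead inline the standard counting proof of that lower bound: writing $A=D\cap V(P_{n_i})$, every vertex covers at most three vertices, and the $n_i-1$ vertices $x_2^i,\ldots,x_{n_i}^i$ can only be covered from inside the branch, so $3|A|\ge n_i-1$. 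This one estimate dispatches the first inequality of (a) and all of (b) with no case analysis at all, which is leaner than the paper there; the price is the refined count in (c) (the vertex $x_1^i$ covers only $x_2^i$ among $x_2^i,\ldots,x_{n_i}^i$, giving $1+3(|A|-1)\ge n_i-1$ when $x_1^i\in A$), which plays exactly the role of the paper's split on $x_2^i\in D$. For the second inequality of (a) the two arguments essentially coincide: both split on $c\in D$, you by adding $1$ to the first inequality when $c\in D$ and by counting over the whole branch when $c\notin D$, the paper by dominating $\widetilde{P}_{n_i}$ resp.\ $P_{n_i}$. Net effect: your version is self-contained, never invoking $\gamma(P_n)$ as a black box, while the paper's is more modular and reuses Proposition \ref{p-d}; both are complete, cover the degenerate cases (e.g.\ $n_i=1$), and are of comparable length.
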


\begin{proof}
(a) Let $n_{i}\equiv 1$ (mod 3). If $c\notin D$, then $D\cap V(P_{n_{i}})$ is a dominating set of $P_{n_{i}}$, and so $|D\cap V(\widetilde{P}_{n_{i}})|= |D\cap V(P_{n_{i}})|\geq \gamma(P_{n_{i}})=\lceil\frac{n_{i}}{3}\rceil$; if $c\in D$, then $D\cap V(\widetilde{P}_{n_{i}})$ is a dominating set of $\widetilde{P}_{n_{i}}$, and so $|D\cap V(\widetilde{P}_{n_{i}})|\geq \lceil\frac{n_{i}+1}{3}\rceil=\lceil\frac{n_{i}}{3}\rceil$, from which we get that $|D\cap V(P_{n_{i}})|=|D\cap V(\widetilde{P}_{n_{i}})|-1\geq \lceil\frac{n_{i}}{3}\rceil-1$.

(b) Let $n_{i}\equiv 2$ (mod 3).  If $x_{1}^{i}\in D$, then $D\cap V(P_{n_{i}})$ is a dominating set of $P_{n_{i}}$, and so $|D\cap V(P_{n_{i}})|\geq \gamma(P_{n_{i}})=\lceil\frac{n_{i}}{3}\rceil$; if $x_{1}^{i}\notin D$, then $D\cap V(P_{n_{i}})$ is a dominating set of $P_{n_{i}}-x_{1}^{i}$, and so $|D\cap V(P_{n_{i}})|\geq \gamma(P_{n_{i}}-x_{1}^{i})=\lceil\frac{n_{i}-1}{3}\rceil=\lceil\frac{n_{i}}{3}\rceil$.

(c) Let $n_{i}\equiv 0$ (mod 3). If $x_{2}^{i}\in D$, then $D\cap V(P_{n_{i}}-x_{1}^{i})$ is a dominating set of $P_{n_{i}}-x_{1}^{i}$, and so $|D\cap V(P_{n_{i}}-x_{1}^{i})|\geq \gamma(P_{n_{i}}-x_{1}^{i})=\lceil\frac{n_{i}-1}{3}\rceil=\lceil\frac{n_{i}}{3}\rceil$; if $x_{2}^{i}\notin D$, then $D\cap V(P_{n_{i}}-x_{1}^{i})$ is a dominating set of $P_{n_{i}}-x_{1}^{i}-x_{2}^{i}$, and so $|D\cap V(P_{n_{i}}-x_{1}^{i})|\geq \gamma(P_{n_{i}}-x_{1}^{i}-x_{2}^{i})=\lceil\frac{n_{i}-2}{3}\rceil=\lceil\frac{n_{i}}{3}\rceil$.
\end{proof}

\begin{thm}\label{sl-domi}
Let $S=S(n_{1},n_{2},\ldots,n_{r},n_{r+1},n_{r+2},\ldots,n_{r+s},n_{r+s+1},n_{r+s+2},\ldots,n_{r+s+t})$ be a starlike tree with $n_{1}\equiv n_{2}\equiv\dots\equiv n_{r}\equiv1$ \emph{(mod 3)}, $n_{r+1}\equiv n_{r+2}\equiv\cdots\equiv n_{r+s}\equiv2$ \emph{(mod 3)}, $n_{r+s+1}\equiv n_{r+s+2}\equiv\cdots\equiv n_{r+s+t}\equiv0$ \emph{(mod 3)} and $l=r+s+t$. Then
\begin{equation*}
\gamma(S)=
\begin{cases}
\sum\limits_{i=1}^{l}\lceil\frac{n_{i}}{3}\rceil-(r-1), & \text{if $r\geq1;$}\\
\sum\limits_{i=1}^{l}\lceil\frac{n_{i}}{3}\rceil, & \text{if $r=0$ and $s\geq1;$}\\
\sum\limits_{i=1}^{l}\lceil\frac{n_{i}}{3}\rceil+1, & \text{if $r=0$ and $s=0$.}
\end{cases}
\end{equation*}
\end{thm}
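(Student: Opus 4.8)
The plan is to establish the formula by matching a lower bound, obtained from the branch-wise counting inequalities of Lemma \ref{sl-d-lemma}, with an upper bound obtained from explicit dominating sets; in both halves the decisive dichotomy is whether or not the center $c$ lies in the dominating set. Throughout I write $\sigma=\sum_{i=1}^{l}\lceil n_i/3\rceil$ and use that $V(S)$ is the disjoint union $\{c\}\sqcup\bigcup_{i=1}^{l}V(P_{n_i})$, so that for any dominating set $D$ one has $|D|=|D\cap\{c\}|+\sum_{i=1}^{l}|D\cap V(P_{n_i})|$.

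For the lower bound, fix $D\in\underline{MDS}(S)$ and split on $c$. If $c\in D$, I would apply Lemma \ref{sl-d-lemma}(a),(b),(c) branch by branch: each of the $r$ branches with $n_i\equiv1$ contributes at least $\lceil n_i/3\rceil-1$, while every other branch contributes at least $\lceil n_i/3\rceil$ (for an $n_i\equiv0$ branch via $|D\cap V(P_{n_i})|\ge|D\cap V(P_{n_i}-x_1^i)|\ge\lceil n_i/3\rceil$), whence $|D|\ge 1+\sigma-r=\sigma-(r-1)$. If $c\notin D$, then $c$ must be dominated by some $x_1^{i_0}\in D$, and since $c\notin D$ every branch satisfies $|D\cap V(P_{n_i})|\ge\lceil n_i/3\rceil$ (for $n_i\equiv1$ this comes from the augmented-branch bound of Lemma \ref{sl-d-lemma}(a), which collapses to the branch itself because $c\notin D$); moreover the distinguished branch $i_0$ with $x_1^{i_0}\in D$ gives, when $n_{i_0}\equiv0$, the strictly larger bound $|D\cap V(P_{n_{i_0}})|\ge 1+\lceil n_{i_0}/3\rceil$ from Lemma \ref{sl-d-lemma}(c). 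Comparing the two alternatives across the three regimes yields $\gamma(S)\ge\sigma-(r-1)$ when $r\ge1$, $\gamma(S)\ge\sigma$ when $r=0,\ s\ge1$, and $\gamma(S)\ge\sigma+1$ when $r=s=0$ (in the last case both alternatives force an extra vertex).

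For the upper bound I would exhibit a dominating set of exactly the claimed size in each regime. When $r\ge1$, take $D=\{c\}$ together with a minimum dominating set of $P_{n_i}-x_1^i$ for every $n_i\equiv1$ branch (of size $\lceil n_i/3\rceil-1$, with $x_1^i$ then dominated by $c$) and a minimum dominating set of $P_{n_i}$ for every other branch; this has size $1+\sigma-r=\sigma-(r-1)$. When $r=0$ and $s\ge1$, I would avoid $c$ and instead use, for one fixed $n_j\equiv2$ branch, a minimum dominating set of $P_{n_j}$ that contains $x_1^j$ (such a set, e.g. $\{x_1^j,x_4^j,\dots\}$, still has the minimum size $\lceil n_j/3\rceil$ and lets $x_1^j$ dominate $c$), together with minimum dominating sets of the remaining branches, giving size $\sigma$. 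When $r=s=0$, take $c$ plus a minimum dominating set of each (length-$\equiv0$) branch, giving $\sigma+1$. Checking that each construction is genuinely dominating is routine.

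The point requiring care—and the main obstacle—is the $r=s=0$, $c\notin D$ sub-case of the lower bound: one must observe that forcing the endpoint $x_1^{i_0}$ into the dominating set of a length-$\equiv0$ branch is strictly more expensive than $\gamma(P_{n_{i_0}})$, which is exactly what Lemma \ref{sl-d-lemma}(c) supplies once $x_1^{i_0}\in D$, so that no dominating set avoiding $c$ can beat $\sigma+1$. Conversely, the existence of a minimum dominating set of a length-$\equiv2$ path containing its endpoint is what makes the bound $\sigma$ (rather than $\sigma+1$) attainable in the $r=0,\ s\ge1$ regime; these two contrasting endpoint phenomena are precisely the source of the three cases in the statement.
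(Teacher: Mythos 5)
Your proposal is correct and takes essentially the same route as the paper: the lower bound rests on the branch-wise counting of Lemma \ref{sl-d-lemma}, and the upper bound on the same explicit per-branch constructions (the center $c$ plus truncated dominating sets on the $\equiv 1$ branches, and an endpoint-containing minimum dominating set of a $\equiv 2$ branch to cover $c$ when $r=0$, $s\geq 1$). The only difference is organizational: where you split into the cases $c\in D$ and $c\notin D$ (handling $r=s=0$ via the extra vertex $x_1^{i_0}\in D$ forced into the branch that dominates $c$), the paper folds both cases into a single inequality by counting $D$ over pairwise disjoint vertex sets --- $V(\widetilde{P}_{n_1})$ when $r\geq 1$, the (truncated) branches, and $N_{S}[c]$ when $r=s=0$ --- which is the same idea with different bookkeeping.
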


\begin{proof}
Define $P_{n_{1}},P_{n_{2}},\ldots,P_{n_{t}}$ and $\widetilde{P}_{n_{1}},\widetilde{P}_{n_{2}},\ldots,\widetilde{P}_{n_{t}}$ as in Lemma \ref{sl-d-lemma}.
Let $c$ be the center of $S$ and $D$ be a minimum dominating set of $S$.

By Lemma \ref{sl-d-lemma}, we have $|D \cap V(\widetilde{P}_{n_1})|\geq\lceil\frac{n_{1}}{3}\rceil$ if $r\geq1$ and $|D\cap V(P_{n_i})|\geq\lceil\frac{n_{i}}{3}\rceil-1$ for every $2\leq i\leq r$, $|D\cap V(P_{n_i})|\geq\lceil\frac{n_{i}}{3}\rceil$ for every $r+1\leq i\leq r+s$ and $|D\cap V(P_{n_i}-x_{1}^{i})|\geq\lceil\frac{n_{i}}{3}\rceil$ for every $r+s+1\leq i\leq l$. Note that $|D\cap N_{S}[c]|\geq 1$. So we obtain that
\begin{equation*}
\gamma(S)=|D|\geq
\begin{cases}
|D \cap V(\widetilde{P}_{n_1})|+\sum\limits_{i=2}^{r}|D\cap V(P_{n_i})|+\sum\limits_{i=r+1}^{r+s}|D\cap V(P_{n_i})|\\ \ \ \ \  +\sum\limits_{i=r+s+1}^{l}|D\cap V(P_{n_i}-x_{1}^{i})|\geq\sum\limits_{i=1}^{l}\lceil\frac{n_{i}}{3}\rceil-(r-1), & \text{if $r\geq1$;}\\
\sum\limits_{i=1}^{s}|D\cap V(P_{n_i})|+\sum\limits_{i=s+1}^{l}|D\cap V(P_{n_i}-x_{1}^{i})|\geq\sum\limits_{i=1}^{l}\lceil\frac{n_{i}}{3}\rceil, & \text{if $r=0$ and $s\geq1$;}\\
\sum\limits_{i=1}^{l}|D\cap V(P_{n_i}-x_{1}^{i})|+|D\cap N_{S}[c]|\geq\sum\limits_{i=1}^{l}\lceil\frac{n_{i}}{3}\rceil+1, & \text{if $r=0$ and $s=0$.}
\end{cases}
\end{equation*}

To prove the converse of above inequality, we need to construct a dominating set of $S$.
Set
\begin{equation*}
D_{i}=
\begin{cases}
\{x_{3}^{i},x_{6}^{i},\ldots,x_{n_{i-1}}^{i}\}, & \text{if $n_{i}\equiv1$(mod 3);}\\
\{x_{1}^{i},x_{4}^{i},\ldots,x_{n_{i-1}}^{i}\}, & \text{if $n_{i}\equiv2$(mod 3);}\\
\{x_{2}^{i},x_{5}^{i},\ldots,x_{n_{i-1}}^{i}\}, & \text{if $n_{i}\equiv0$(mod 3),}
\end{cases}
\end{equation*}
$i=1,2,\cdots,l$, and
\begin{equation*}
D_{0}=
\begin{cases}
\{c\}\cup \bigcup\limits_{i=1}^{l}D_{i}, & \text{if $r\geq1$;}\\
\bigcup\limits_{i=1}^{l}D_{i}, & \text{if $r=0$ and $s\geq1$;}\\
\{c\}\cup \bigcup\limits_{i=1}^{l}D_{i}, & \text{if $r=0$ and $s=0$.}
\end{cases}
\end{equation*}
Then $D_{0}$ is a dominating set of $S$. Hence
\begin{equation*}
\gamma(S)\leq|D_{0}|=
\begin{cases}
\sum\limits_{i=1}^{l}\lceil\frac{n_{i}}{3}\rceil-(r-1), & \text{if $r\geq1$;}\\
\sum\limits_{i=1}^{l}\lceil\frac{n_{i}}{3}\rceil, & \text{if $r=0$ and $s\geq1$;}\\
\sum\limits_{i=1}^{l}\lceil\frac{n_{i}}{3}\rceil+1, & \text{if $r=0$ and $s=0$.}
\end{cases}
\end{equation*}
\end{proof}

\begin{lemma}\label{bondagestar}
$b(K_{m}\boxtimes K_{1,n})\geq\lceil\frac{m}{2}\rceil$ for any $n\geq0$.
\end{lemma}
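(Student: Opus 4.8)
The plan is to prove the lower bound $b(K_{m}\boxtimes K_{1,n})\geq\lceil\frac{m}{2}\rceil$ by showing that removing any edge set $Z$ with $|Z|<\lceil\frac{m}{2}\rceil$ cannot increase the domination number; that is, $\gamma(K_{m}\boxtimes K_{1,n}-Z)=\gamma(K_{m}\boxtimes K_{1,n})=1$. Since $\gamma(K_{m}\boxtimes K_{1,n})=1$ (because a single vertex of the central fibre $K_{m}\boxtimes\{c\}$ dominates the whole graph, as $\{c\}$ dominates $K_{1,n}$ and $K_m$ is complete — this follows from Proposition~\ref{strong-p2} together with $P_2(K_{1,n})=\gamma(K_{1,n})=1$), the task reduces to exhibiting a single dominating vertex that survives the deletion of $Z$.

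The key observation is that Lemma~\ref{bondage star} already does almost all of this work. That lemma guarantees that if $|Z\cap E^{\ast}(K_{m}\boxtimes K_{1,n})|<\lceil\frac{m}{2}\rceil$, then $\gamma(K_{m}\boxtimes K_{1,n}-Z)=1$, where $E^{\ast}(K_{m}\boxtimes K_{1,n})=E(K_{m}\boxtimes K_{1,n})-E(K_{m}\boxtimes K_{1,n}[N_{K_{1,n}}(c)])$. Thus the first step is to note the trivial containment $Z\cap E^{\ast}(K_{m}\boxtimes K_{1,n})\subseteq Z$, which immediately yields $|Z\cap E^{\ast}(K_{m}\boxtimes K_{1,n})|\leq|Z|<\lceil\frac{m}{2}\rceil$. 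Applying Lemma~\ref{bondage star} then gives $\gamma(K_{m}\boxtimes K_{1,n}-Z)=\gamma(K_{m}\boxtimes K_{1,n})$ for every such $Z$. Since no edge set of size strictly less than $\lceil\frac{m}{2}\rceil$ can be a bondage edge set, the bondage number is at least $\lceil\frac{m}{2}\rceil$, which is exactly the claim.

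I should be careful about the degenerate cases $n=0$ and $n=1$. When $n=0$ the graph $K_{1,0}$ is a single vertex, so $K_{m}\boxtimes K_{1,0}\cong K_{m}$ and the statement reduces to $b(K_m)\geq\lceil\frac{m}{2}\rceil$, which is Proposition~\ref{k-b}; here one needs $m\geq2$ for the bondage number to be defined, and for $m=1$ the bound $\lceil\frac12\rceil=1$ holds vacuously since $K_1$ has no edges. When $n=1$, $K_{1,1}$ is an edge, so $K_{m}\boxtimes K_{1,1}\cong K_{2m}$ by Lemma~\ref{block-1,2}, and the bound follows either directly from Proposition~\ref{k-b} or again from Lemma~\ref{bondage star}. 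In all remaining cases $n\geq2$ the center $c$ is genuinely distinguished and Lemma~\ref{bondage star} applies verbatim.

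The proof is essentially immediate once Lemma~\ref{bondage star} is in hand, so there is no serious obstacle; the only thing requiring care is the bookkeeping at the boundary values of $n$ and confirming that the inequality $|Z\cap E^{\ast}|\leq|Z|$ is all that is needed to invoke the earlier lemma. The conceptual content is entirely contained in Lemma~\ref{bondage star}: the reason $\lceil\frac{m}{2}\rceil$ is the right threshold is that one needs at least $\lceil\frac{m}{2}\rceil$ edges to cover all $m$ vertices of the central fibre $X=V(K_{m}\boxtimes\{c\})$, and as long as some vertex of $X$ remains uncovered, that vertex still dominates the entire graph after the deletion.
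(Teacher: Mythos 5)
Your proof is correct and is exactly the paper's argument: the paper also deduces the bound as an immediate consequence of Lemma~\ref{bondage star}, via the observation that $|Z\cap E^{\ast}(K_{m}\boxtimes K_{1,n})|\leq|Z|<\lceil\frac{m}{2}\rceil$ forces $\gamma(K_{m}\boxtimes K_{1,n}-Z)=\gamma(K_{m}\boxtimes K_{1,n})$. Your extra bookkeeping for $n=0,1$ is fine but not needed, since Lemma~\ref{bondage star} applies verbatim in those cases as well.
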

\begin{proof}
It is immediate from Lemma \ref{bondage star}.
\end{proof}

\begin{thm}
Let $S(n_{1},n_{2},\ldots,n_{l})$ be a starlike tree with at least two branches. Then
\begin{equation*}
b(K_{m}\boxtimes S)=
\begin{cases}
\lceil\frac{m}{2}\rceil, & \text{if $n_{1}\equiv n_{2}\equiv\cdots\equiv n_{l}\equiv1$ \emph{(mod 3);}}\\
m, & \text{if $n_{1}\equiv n_{2}\equiv\cdots\equiv n_{l}\equiv2$ \emph{(mod 3);}}\\
\lceil\frac{3m}{2}\rceil, & \text{if $n_{1}\equiv n_{2}\equiv\cdots\equiv n_{l}\equiv0$ \emph{(mod 3)}.}
\end{cases}
\end{equation*}
\end{thm}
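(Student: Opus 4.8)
The plan is to reduce everything to a domination count and then run a matched upper/lower bound for each of the three residue classes. By Propositions \ref{Tp2} and \ref{strong-p2} (applied to the tree $S$) we have $\gamma(K_m\boxtimes S)=\gamma(K_m)\gamma(S)=\gamma(S)$, and $\gamma(S)$ is given explicitly by Theorem \ref{sl-domi}: it equals $\sum_i\lceil n_i/3\rceil-(l-1)$ in the $\equiv 1$ case, $\sum_i\lceil n_i/3\rceil$ in the $\equiv 2$ case, and $\sum_i\lceil n_i/3\rceil+1$ in the $\equiv 0$ case. The three answers $\lceil m/2\rceil,\ m,\ \lceil 3m/2\rceil$ are exactly those of the path theorem (Theorem \ref{main-thm}), and I expect the branch residue to dictate \emph{where} the cheapest attack lives: an internal $K_m$-layer (cost $\lceil m/2\rceil$), a $K_{2m}$-edge-block (cost $m$), or a suspended end (cost $\lceil 3m/2\rceil$). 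For $m=1$ the product is just $S$ and the statement reduces to the bondage number of a tree, handled by the cited tree results; so I assume $m\ge 2$ below.

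For the \textbf{upper bounds} I would exhibit explicit bondage sets. In the $\equiv 0$ case the bound $\lceil 3m/2\rceil$ is immediate from Lemma \ref{suspended vertex}, since every leaf-end $x^i_{n_i}$ has degree one in $S$. In the $\equiv 1$ case I would remove $Z_c^{-}$ on the center layer $K_m\boxtimes\{c\}$ (size $\lceil m/2\rceil$). The key structural claim is that every minimum dominating set $D$ of $K_m\boxtimes S$ occupies the center layer in exactly one vertex and uses no vertex in layers $x_1^i,x_2^i$ of any branch: indeed each branch must dominate its layers $x_2^i,\dots,x_{n_i}^i$ from within (the center reaches only layer $x_1^i$), costing $\ge(n_i-1)/3$; if the center layer were empty then \emph{every} branch would also have to dominate its own $x_1^i$, forcing $\ge\lceil n_i/3\rceil=(n_i-1)/3+1$ per branch and total $\ge\gamma(S)+(l-1)>\gamma(S)$ (here $l\ge2$ is essential). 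Tightness then forces the branch dominators to the layers $x_3^i,x_6^i,\dots$, so the center layer is dominated \emph{only} by its single occupant, which $Z_c^{-}$ (an edge cover of $K_m$) defeats. In the $\equiv 2$ case I would remove $Z^{\thinspace\shortmid}$ on a far block $K_m\boxtimes\{x^i_{n_i-1}x^i_{n_i}\}\cong K_{2m}$ (size $m$); the same counting shows the center layer is empty and each branch restriction is a minimum dominating set of $K_m\boxtimes P_{n_i}$, whence Lemma \ref{block-0}(b) forces layer $x^i_{n_i-2}$ (position $\equiv 0$) to be empty, the far block is self-dominated by a single vertex (Lemmas \ref{st},\ref{v}), and $Z^{\thinspace\shortmid}$, a bondage set of $K_{2m}$, produces an undominated leaf-layer vertex exactly as in Lemma \ref{suspended vertex}.

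For the \textbf{lower bounds} I would show that deleting fewer edges keeps $\gamma$ unchanged, by building a dominating set of size $\gamma(S)$ after removal. In the $\equiv 1$ case, given $|Z|<\lceil m/2\rceil$, apply Lemma \ref{bondage star} to the star $K_m\boxtimes K_{1,l}$ spanned by $c$ and all first layers to get one center vertex dominating that whole region, and split each branch remainder $K_m\boxtimes P_{n_i-1}$ ($n_i-1\equiv0$) into $(n_i-1)/3$ triples, each dominated by one vertex via Lemma \ref{p3*-b}; the union has size $1+\sum_i(n_i-1)/3=\gamma(S)$. In the $\equiv 2$ case, given $|Z|<m$, absorb the center into one augmented branch $\widetilde P_{n_1}\cong P_{n_1+1}$ ($\equiv 0$), decompose it and the remaining branches into triples and one terminal $K_{2m}$ each, and handle a possible ``heavy'' block (one meeting $Z$ in $\ge\lceil m/2\rceil$ edges) by the regrouping device of Lemma \ref{lower-2}, using $|Z|<m$ to limit heaviness.

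The \textbf{main obstacle} is the $\equiv 0$ lower bound $b\ge\lceil 3m/2\rceil$, which is the analogue of the most delicate path case, Lemma \ref{lower-1}. Here $\gamma(S)=\sum_i\lceil n_i/3\rceil+1$, and the extra $+1$ (the cost of covering $c$) is precisely the fragile quantity an adversary attacks. For $|Z|<\lceil 3m/2\rceil$ I would run an adaptive decomposition: try to place one center vertex dominating $K_m\boxtimes\{c\}$ (via Lemma \ref{bondage star}) together with the branch triples $\{x_1^i,x_2^i,x_3^i\},\dots$; if the center region or some triple is heavy, re-route domination through a first-layer vertex of a branch and re-phase that $\equiv 0$ branch, using a covering argument in the spirit of Lemma \ref{lower-1-lemma} to guarantee such a vertex exists when fewer than $\lceil 3m/2\rceil$ edges are gone. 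The difficulty, exactly as in Lemma \ref{lower-1}, is the case bookkeeping: because $\lceil 3m/2\rceil=\lceil m/2\rceil+m$, up to roughly two blocks can be heavy simultaneously, and one must show the center's ``$+1$'' can always be realized by re-routing around them. I expect this to be the longest and most technical part of the proof.
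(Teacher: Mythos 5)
Your upper-bound constructions and your $\equiv 1 \pmod{3}$ lower bound coincide with the paper's proof (the sets $Z_c^-$ and $Z^{\thinspace\shortmid}_{x^1_{n_1-1}x^1_{n_1}}$, Lemma \ref{suspended vertex}; the star Lemma \ref{bondage star} plus triple decomposition of the pruned branches). The genuine gaps are in the other two lower bounds, and both stem from the same missing idea. For $n_i\equiv 2 \pmod{3}$ you absorb the center into $\widetilde P_{n_1}$ alone, a path of order $n_1+1\equiv 0 \pmod{3}$, and plan to neutralize a possible heavy block ``by the regrouping device of Lemma \ref{lower-2}''. That device cannot operate inside a path of order $N\equiv 0\pmod{3}$: any decomposition of such a path into $N/3$ blocks, each to be dominated by a single vertex, forces every block to be a triple (a sub-path of order at least $4$ needs two dominators), so there is no way to convert the heavy region into a pair without raising the block count to $N/3+1$. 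Concretely, take $Z=Z^-_{x_1^1}$, of size $\lceil\frac{m}{2}\rceil\le m-1$: then $\gamma\bigl(K_m\boxtimes\{c,x_1^1,x_2^1\}-Z\bigr)=2$ (a vertex in layer $c$ misses layer $x_2^1$, a vertex in layer $x_2^1$ misses layer $c$, and a vertex in layer $x_1^1$ misses the vertex matched to it by $Z^-_{x_1^1}$), so every decomposition available to you certifies only $\gamma(K_m\boxtimes S-Z)\le\gamma(K_m\boxtimes S)+1$, which proves nothing; the domination number is in fact unchanged, but a block method blind to cross-block domination cannot see it. The paper's fix is to let one block system cross the center into a \emph{second} branch: $S[V(\widetilde P_{n_1})\cup V(P_{n_2})]$ is a path of order $n_1+1+n_2\equiv 2\pmod{3}$, to which Lemma \ref{lower-2} applies wholesale with the full budget $|Z|<m$; its internal regrouping succeeds exactly because that path ends in a pair which absorbs the parity shift. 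This is also where the hypothesis $l\ge 2$ is used.

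The same merging device disposes of the $\equiv 0\pmod 3$ case, which you leave as an admitted open ``main obstacle''. Relabel the branches so that $K_m\boxtimes P_{n_1}$ and $K_m\boxtimes P_{n_2}$ contain the largest and second-largest numbers of edges of $Z$; since the branch products are pairwise edge-disjoint and $3\lceil\frac{m}{2}\rceil\ge\lceil\frac{3m}{2}\rceil>|Z|$, every branch $i\ge3$ meets $Z$ in fewer than $\lceil\frac{m}{2}\rceil$ edges, and Lemma \ref{lower-3} dominates it with $\lceil\frac{n_i}{3}\rceil$ vertices after deletion. Then $K_m\boxtimes S[V(\widetilde P_{n_1})\cup V(P_{n_2})]\cong K_m\boxtimes P_{n_1+1+n_2}$ with $n_1+1+n_2\equiv 1\pmod{3}$, and Lemma \ref{lower-1} --- the already-proved hard path case, which tolerates the entire budget $|Z|<\lceil\frac{3m}{2}\rceil$ --- dominates it with $\lceil\frac{n_1+n_2+1}{3}\rceil=\lceil\frac{n_1}{3}\rceil+\lceil\frac{n_2}{3}\rceil+1$ vertices. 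Summing gives $\gamma(K_m\boxtimes S-Z)\le\sum_{i=1}^{l}\lceil\frac{n_i}{3}\rceil+1=\gamma(K_m\boxtimes S)$ by Theorem \ref{sl-domi}. So no new ``adaptive'' heavy-block bookkeeping is needed at all: the paper delegates the whole difficulty to Lemma \ref{lower-1} applied to one merged $\equiv1\pmod 3$ path, and your plan to redo that analysis inside the starlike tree is an unnecessary (and, as sketched, incomplete) detour.
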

\begin{proof}
{\it Case 1.} $n_{1}\equiv n_{2}\equiv\cdots\equiv n_{l}\equiv1$ (mod 3).

First, we prove $b(K_{m}\boxtimes S)\geq \lceil\frac{m}{2}\rceil$. Let $Z\subseteq E(K_{m}\boxtimes S)$ with $|Z|<\lceil\frac{m}{2}\rceil$. We need to prove that $\gamma(K_{m}\boxtimes S-Z)=\gamma(K_{m}\boxtimes S)$. Let $D_{c}\in\underline{MDS}(K_{m}\boxtimes S[N_{S}[c]]-Z)$ and $D_{i}\in\underline{MDS}(K_{m}\boxtimes (P_{n_i}-x_{1}^{i})-Z)$, $i=1,2,\ldots,l$. By Lemmas \ref{bondagestar} and \ref{lower-3}, we have $|D_{c}|=1$ and $|D_{i}|=\lceil\frac{n_{i}-1}{3}\rceil$ for every $1\leq i\leq l$. Thus by Proposition \ref{strong-p2} and Theorem \ref{sl-domi}, we have $\gamma(K_{m}\boxtimes S-Z)\leq |D_{c}\cup \bigcup\limits_{i=1}^{l}D_{i}|=1+\sum\limits_{i=1}^{l}\lceil\frac{n_{i}-1}{3}\rceil=\sum\limits_{i=1}^{l}\lceil\frac{n_{i}}{3}\rceil-(l-1)=\gamma(K_{m}\boxtimes S)$, which implies that $\gamma(K_{m}\boxtimes S-Z)=\gamma(K_{m}\boxtimes S)$.

Next, we prove $b(K_{m}\boxtimes S)\leq \lceil\frac{m}{2}\rceil$. It suffices to prove that $\gamma(K_{m}\boxtimes S-Z^{-}_{c})>\gamma(K_{m}\boxtimes S)$. (Recall that the definition of $Z^{-}_{c}$ was given in Subsection 3.2.)
Suppose to the contrary that $\gamma(K_{m}\boxtimes S-Z^{-}_{c})=\gamma(K_{m}\boxtimes S)$. So, for $D\in \gamma(K_{m}\boxtimes S-Z^{-}_{c})$, we have $D\in \gamma(K_{m}\boxtimes S)$.

\smallskip
{\it Claim 1.1.} $|D\cap V(K_{m}\boxtimes \{c\})|=1$.

Otherwise, we have $D\cap V(K_{m}\boxtimes \{c\}) =\emptyset$ by Lemma \ref{v}, which implies that $D\cap V(K_{m}\boxtimes P_{n_i})$ is a dominating set of $K_{m}\boxtimes P_{n_i}$ for every $1\leq i\leq l$. Hence $|D|=\sum\limits_{i=1}^{l}|D\cap V(K_{m}\boxtimes P_{n_i})|\geq\sum\limits_{i=1}^{l}\gamma(K_{m}\boxtimes P_{n_i})=\sum\limits_{i=1}^{l}\lceil\frac{n_{i}}{3}\rceil>\sum\limits_{i=1}^{l}\lceil\frac{n_{i}}{3}\rceil-(l-1)=|D|$, a contradiction.

\smallskip
{\it Claim 1.2.} $D\cap V(K_{m}\boxtimes S[N_{S}(c)])=\emptyset$.

It is similar to Lemma \ref{block-d}, we can easily deduce that $|D\cap V(K_{m}\boxtimes (P_{n_i}-x_{1}^{i}))|\geq \gamma(K_{m}\boxtimes(P_{n_i}-x_{1}^{i}-x_{2}^{i}))=\lceil\frac{n_{i}-2}{3}\rceil$ for every $1\leq i\leq l$. Hence
\begin{center}
$|D|  =  |D\cap V(K_{m}\boxtimes \{c\})|+|D\cap V(K_{m}\boxtimes S[N_{S}(c)])|+\sum\limits_{i=1}^{l}|D\cap V(K_{m}\boxtimes (P_{n_i}-x_{1}^{i}))|$ \\
$\geq 1+|D\cap V(K_{m}\boxtimes S[N_{S}(c)])|+\sum\limits_{i=1}^{l}\lceil\frac{n_{i}-2}{3}\rceil$ \ \ \ \ \ \ \ \ \ \ \ \ \ \ \ \ \ \ \ \ \ \ \ \ \ \ \ \ \ \ \ \ \ \ \ \ \ \ \ \ \ \ \ \ \ \ \\
$=  |D\cap V(K_{m}\boxtimes S[N_{S}(c)])|+\sum\limits_{i=1}^{l}\lceil\frac{n_{i}}{3}\rceil-(l-1)$  \ \ \ \ \ \ \ \ \ \ \ \ \ \ \ \ \ \ \ \ \ \ \ \ \ \ \ \ \ \ \ \ \ \ \ \ \ \ \ \ \ \\
 $ =  |D\cap V(K_{m}\boxtimes S[N_{S}(c)])|+|D|,$  \ \ \ \ \ \ \ \ \ \ \ \ \ \ \ \ \ \ \ \ \ \ \ \ \ \ \ \ \ \ \ \ \ \ \ \ \ \ \ \ \ \ \ \ \ \ \ \ \ \ \ \ \ \ \ \ \
\end{center}
which implies that $|D\cap V(K_{m}\boxtimes S[N_{S}(c)])|=0$.

By Claims 1.2, we can see that $D\cap V(K_{m}\boxtimes \{c\})$ is a dominating set of $K_{m}\boxtimes \{c\}-Z^{-}_{c}$, which implies that $\gamma(K_{m}\boxtimes \{c\}-Z^{-}_{c})=1=\gamma(K_{m}\boxtimes \{c\})$. But it is impossible since $Z^{-}_{c}$ is a bondage edge set of $K_{m}\boxtimes \{c\}$.

\smallskip
{\it Case 2.} $n_{1}\equiv n_{2}\equiv\cdots\equiv n_{l}\equiv2$ (mod 3).

First, we prove $b(K_{m}\boxtimes S)\geq m$. Let $Z\subseteq E(K_{m}\boxtimes S)$ with $|Z|<m$. It suffices to prove that $\gamma(K_{m}\boxtimes S-Z)=\gamma(K_{m}\boxtimes S)$. Let $D_{1,2}=\underline{MDS}(K_{m}\boxtimes S[V(\widetilde{P}_{n_1})\cup V(P_{n_2} )]-Z)$ and $D_{i}=\underline{MDS}(K_{m}\boxtimes P_{n_i}-Z)$, $i=3,4,\ldots,l$. By Lemma \ref{lower-2}, we have $|D_{1,2}|=\lceil\frac{n_{1}+1+n_{2}}{3}\rceil=\lceil\frac{n_{1}}{3}\rceil+\lceil\frac{n_{2}}{3}\rceil$ and $|D_{i}|=\lceil\frac{n_{i}}{3}\rceil$ for every $3\leq i\leq l$. Hence $\gamma(K_{m}\boxtimes S-Z)\leq|D_{1,2}|+\sum\limits_{i=3}^{l}|D_{i}|=\sum\limits_{i=1}^{l}\lceil\frac{n_{i}}{3}\rceil=\gamma(K_{m}\boxtimes S)$, which implies that $\gamma(K_{m}\boxtimes S-Z)=\gamma(K_{m}\boxtimes S)$.

 Next, we prove $b(K_{m}\boxtimes S)\leq m$. Recall that we have set $L_{n_{1}}=x^{1}_{1}x^{1}_{2}\cdots x^{1}_{n_{1}-1}x^{1}_{n_{1}}$ with $x^{1}_{n_{1}}$ being a vertex of degree one in $S$. Since $|Z_{x^{1}_{n_{1}-1}x^{1}_{n_{1}}}^{\thinspace\shortmid}|=m$, we just need to prove that $\gamma(K_{m}\boxtimes S-Z_{x^{1}_{n_{1}-1}x^{1}_{n_{1}}}^{\thinspace\shortmid})> \gamma(K_{m}\boxtimes S)$. Suppose to the contrary that $\gamma(K_{m}\boxtimes S-Z_{x^{1}_{n_{1}-1}x^{1}_{n_{1}}}^{\thinspace\shortmid})= \gamma(K_{m}\boxtimes S)$. Then, for $D\in \underline{MDS}(K_{m}\boxtimes S-Z_{x^{1}_{n_{1}-1}x^{1}_{n_{1}}}^{\thinspace\shortmid})$, we have $D\in \underline{MDS}(K_{m}\boxtimes S)$.

\smallskip
{\it Claim 2.1.} $D\cap V(K_{m}\boxtimes \{c\})=\emptyset$.

It is similar to Lemma \ref{block-d}, we can deduce that $|D\cap V(K_{m}\boxtimes P_{n_i})|\geq \gamma(K_{m}\boxtimes (P_{n_i}-x_{1}^{i}))=\lceil\frac{n_{i}-1}{3}\rceil=\lceil\frac{n_{i}}{3}\rceil$ for every $1\leq i\leq l$. Hence $|D|\geq |D\cap V(K_{m}\boxtimes \{c\}) |+\sum\limits_{i=1}^{l}|D\cap V(K_{m}\boxtimes P_{n_i})|\geq |D\cap V(K_{m}\boxtimes \{c\}) |+\sum\limits_{i=1}^{l}\lceil\frac{n_{i}}{3}\rceil=|D\cap V(K_{m}\boxtimes \{c\})|+|D|$, which implies that $|D\cap V(K_{m}\boxtimes \{c\})|=0$.

\smallskip
{\it Claim 2.2.} $D\cap V(K_{m}\boxtimes P_{n_1})\in\underline{MDS}(K_{m}\boxtimes P_{n_1})$.

By Claim 2.1, we have $D\cap V(K_{m}\boxtimes P_{n_i})$ is a dominating set of $K_{m}\boxtimes P_{n_i}$ for every $1\leq i\leq l$. So we get that $|D|\geq\sum\limits_{i=1}^{l}|D\cap V(K_{m}\boxtimes P_{n_i})|\geq\sum\limits_{i=1}^{l}\gamma(K_{m}\boxtimes L_{n_i})=\sum\limits_{i=1}^{l}\lceil\frac{n_{i}}{3}\rceil=|D|$, which implies that $|D\cap V(K_{m}\boxtimes P_{n_i})|=\gamma(K_{m}\boxtimes P_{n_i})$.
Thus, $D\cap V(K_{m}\boxtimes P_{n_i})\in\underline{MDS}(K_{m}\boxtimes P_{n_i})$ for every $1\leq i\leq l$.

\smallskip
Now, if $n_1>2$, then we have $|D\cap V(K_{m}\boxtimes \{x_{n_{1}-2}^{1}\})|=0$ by Claim 2.2 and Lemma \ref{block-0} (b); if $n_1=2$, then we have $|D\cap V(K_{m}\boxtimes \{c\})|=0$ by Claim 2.1. From these two observations, we can see that $D\cap V(K_{m}\boxtimes \{x_{n_{1}-1}^{1}x_{n_{1}}^{1}\})$ is a dominating set of $K_{m}\boxtimes \{x_{n_{1}-1}^{1}x_{n_{1}}^{1}\}-Z_{x^{1}_{n_{1}-1}x^{1}_{n_{1}}}^{\thinspace\shortmid}$. But it is impossible since we have $\gamma(K_{m}\boxtimes \{x_{n_{1}-1}^{1}x_{n_{1}}^{1}\}-Z_{x^{1}_{n_{1}-1}x^{1}_{n_{1}}}^{\thinspace\shortmid})>1=|D\cap V(K_{m}\boxtimes \{x_{n_{1}-1}^{1}x_{n_{1}}^{1}\})|$ by Lemmas \ref{vxy} and \ref{st}.
Hence $b(K_{m}\boxtimes S)\leq m$.

\smallskip
{\it Case 3.} $k_{1}\equiv k_{2}\equiv\cdots\equiv k_{l}\equiv0$ (mod 3).

First, we prove $b(K_{m}\boxtimes S)\geq \lceil\frac{3m}{2}\rceil$. Let $Z\in E(K_{m}\boxtimes S)$ with $|Z|<\lceil\frac{3m}{2}\rceil$. Assume without loss of generality that

\

\begin{center}
$|E(K_{m}\boxtimes P_{n_1})\cap Z|=$max$\{|E(K_{m}\boxtimes P_{n_i})\cap Z|:1\leq i\leq l\}$; \\

$|E(K_{m}\boxtimes P_{n_2})\cap Z|=$max$\{|E(K_{m}\boxtimes P_{n_i})\cap Z|:2\leq i\leq l\}$.
\end{center}

\noindent Then we must have $|E(K_{m}\boxtimes P_{n_i})\cap Z|<\lceil\frac{m}{2}\rceil$ for every $3\leq i\leq l$. Let $D_{1,2}\in\underline{MDS}(K_{m}\boxtimes S[V(\widetilde{P}_{n_1})\cup V(P_{n_2})]-Z)$ and $D_{i}\in\underline{MDS}(K_{m}\boxtimes P_{n_i}-Z)$, $i=3,4,\ldots,l$. By Lemmas \ref{lower-1} and \ref{lower-3}, we have $|D_{1,2}|=\lceil\frac{n_{1}+1+n_{2}}{3}\rceil$ and $|D_{i}|=\lceil\frac{n_{i}}{3}\rceil$ for every $3\leq i\leq l$. Thus $\gamma(K_{m}\boxtimes S-Z)\leq|D_{1,2}|+\sum\limits_{i=3}^{l}|D_{i}|=\sum\limits_{i=1}^{l}\lceil\frac{n_{i}}{3}\rceil+1=\gamma(K_{m}\boxtimes S)$, which implies that $\gamma(K_{m}\boxtimes S-Z)=\gamma(K_{m}\boxtimes S)$. So $b(K_{m}\boxtimes S)\geq \lceil\frac{3m}{2}\rceil$.

On the other hand, we have $b(K_{m}\boxtimes S)\leq \lceil\frac{3m}{2}\rceil$ by Theorem \ref{suspended vertex}. Hence $b(K_{m}\boxtimes S)= \lceil\frac{3m}{2}\rceil$.
\end{proof}

\end{document}